\theoremstyle{plain}
\theoremstyle{plain}
\newtheorem{theorem}{Theorem}[section]
\newtheorem{corollary}[theorem]{Corollary}
\newtheorem{lemma}[theorem]{Lemma}
\newtheorem{proposition}[theorem]{Proposition}
\theoremstyle{definition}
\newtheorem{remark}[theorem]{Remark}
\newtheorem{example}[theorem]{Example}
\numberwithin{equation}{section}
\begin{document}

%%%%% To ease editing, for IMPAN journals add:

\baselineskip=17pt

%%%%%%%%%%%

\title[The critical orbit structure of quadratic polynomials in $\mathbb{Z}_p$]{The critical orbit structure of quadratic polynomials in $\mathbb{Z}_p$}

\author[C. Mullen]{Cara Mullen}
\address{Department of Mathematics, Statistics, and Computer Science\\ University of Illinois at Chicago\\ Chicago, IL 60607 USA}
\email{cmulle4@uic.edu}

\date{}

\begin{abstract}
We study the forward orbit of the critical point for polynomials of the form $f_c=z^2+c$ defined over $\mathbb{Z}_p$.  Hubbard trees capture the dynamical behavior for such maps with finite critical orbit in $\mathbb{C}$.  We suggest a notion of Hubbard trees in the non-Archimedean setting, and describe the possible structures that arise for polynomials in $\mathbb{Z}_p$.  As an example, we take a closer look at the dynamics of $f_c$ for $c\in\mathbb{Z}_3$.

\end{abstract}

\subjclass[2010]{Primary 11S82; Secondary 37P05}

\keywords{non-Archimedean dynamical systems}

\maketitle

%******************************
\section{Introduction}
%**************************

In one-dimensional complex dynamics, the forward orbit of the critical points completely determine the dynamical behavior of a polynomial $f$.  If all such orbits are finite, the polynomial is called \textit{post-critically finite} (PCF), and it has an associated Hubbard tree which can capture that behavior.  In particular, the Hubbard tree illustrates the \textit{orbit type} of a critical point $\alpha$, the minimal pair $(m,n)$ such that $f^{m+n}(\alpha)=f^m(\alpha)$, and the geometry of these orbits within the Julia set of $f$. Hubbard trees have been well studied, and a full classification of finite critical orbit trees is known in this  setting.  For more on complex Hubbard trees, see  \cite{Bruin}, \cite{Orsay}, or \cite{Poirier}.

The goal of this article is to begin to understand what the analogous objects are in a non-Archimedean setting. We explore the critical orbits and corresponding trees for quadratic polynomials of the form $f_c(z) = z^2+c$ defined over $\mathbb{Z}_p$.  For each $c$ in $\mathbb{Z}_p$, the orbit of the unique critical point is bounded, so $c$ lies in the $p$-adic analogue of the Mandelbrot set, $\mathcal{M}_p= \{ c\in\mathbb{C}_p : \mbox{ the critical orbit of } f_c(z) = z^2 + c \mbox{ is bounded}\}$, which is simply the closed unit disk.  $\mathcal{M}_p$ is more than just a simple disk, however; it has the structure of an infinitely branching tree with many interesting features, which lead to striking patterns in the dynamics of these uni-critical polynomials.

For example, for each prime $p>2$, it is known that if 0 has period $n$ under iteration of the reduced map $\widetilde{f_c(z)} = z^2+\tilde{c}$, then either 0 is periodic of exact period $n$ under iteration of $f$, or it has infinite orbit and is attracted to an attracting $n$-cycle.  This follows from the work of Jones \cite{Jones} and Rivera-Letelier \cite{Juan}, as well as Benedetto, Ingram, Jones and Levy \cite{Benedetto}.  See the discussion in Section \ref{proof} for details.

A natural extension of this work is to study what happens when 0 is strictly pre-periodic under iteration of $\widetilde{f_c(z)}=z^2 + \tilde{c}$.  Building on earlier results from Pezda \cite{Pezda} and Morton-Silverman \cite{Morton}, we prove the following theorem:

\begin{theorem}\label{preper}
Let $p\geq 3$ and consider the critical orbit for $f_c(z)=z^2+c$, $c\in\mathbb{Z}_p$.  If 0 is strictly pre-periodic with orbit type $(m,n)$ (mod $p$), $m>0$, then either 0 has orbit type $(m,n)$ over $\mathbb{Z}_p$ or there exists some $k\geq 1$ and $r|(p-1)$ in $\mathbb{Z}$ such that 
\begin{enumerate}
\item 0 has orbit type $(m,n)$ (mod $p^i$) for all $i\leq k$, and 
\item 0 has orbit type $(m, rn)$ (mod $p^j$) for all $j> k$, with $r=p$ if $p=3$.
\end{enumerate}
Otherwise, 0 has infinite orbit, with orbit type $(m, n_i)$ (mod $p^i$) for all $i\geq 1$, where $n_i$ is the length of the cycle on which 0 lands when its orbit is calculated (mod $p^i$).
\end{theorem}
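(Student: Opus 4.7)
The plan is to reduce the question to analyzing the period of $\alpha_m := f_c^m(0)$ under the iterate $g := f_c^n$, and then invoke the non-Archimedean cycle-length bounds of Pezda and Morton--Silverman. First I would argue that the pre-period is preserved. If the orbit type in $\mathbb{Z}_p$ were some $(m', n')$ with $m' > m$, then minimality of $m'$ forces $\alpha_{m'-1} \ne \alpha_{m'+n'-1}$ while $f_c(\alpha_{m'-1}) = f_c(\alpha_{m'+n'-1})$; since $f_c(z)=z^2+c$ is two-to-one off the critical point, this gives $\alpha_{m'-1} = -\alpha_{m'+n'-1}$. Reducing mod $p$ and using that $m'-1 \ge m$ places $\widetilde{\alpha_{m'-1}}$ in the reduced cycle of length $n$, together with $n\mid n'$, one gets $\widetilde{\alpha_{m'-1}} = \widetilde{\alpha_{m'+n'-1}} = -\widetilde{\alpha_{m'-1}}$ in $\mathbb{F}_p$, hence $\widetilde{\alpha_{m'-1}}=0$ (using $p\ge 3$). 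But then $0=\alpha_0$ sits in the mod-$p$ cycle, contradicting $m>0$.

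With the pre-period pinned at $m$, the question becomes the period of $\alpha_m$ under $g$. Since $\alpha_0=0$ does not appear in the reduced cycle, the multiplier $\lambda := g'(\alpha_m) = 2^n\prod_{i=0}^{n-1}\alpha_{m+i}$ has $\widetilde\lambda \in \mathbb{F}_p^*$; let $r$ denote its multiplicative order, so $r\mid p-1$. Applying the Pezda / Morton--Silverman cycle-length bounds to $g$ at its fixed-mod-$p$ point $\alpha_m$, the admissible periods of $\alpha_m$ under $g$ in $\mathbb{Z}_p$ are $1$, $r$, or infinite when $p\ge 5$, with the additional possibility of period $p$ when $p=3$ (from the extra factor of $p$ Pezda permits in small residue characteristic, corresponding to the case $\widetilde\lambda = 1$). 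These three outcomes give the three cases of the theorem.

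To refine this to every mod-$p^i$ orbit type, I would set $\delta := g(\alpha_m) - \alpha_m \in p\mathbb{Z}_p$ and expand $g$ locally about $\alpha_m$ as $g(\alpha_m + u) = \alpha_m + \delta + \lambda u + \phi(u)$ with $\phi(u) = O(u^2)$. Induction on $j$ then yields
\[
g^j(\alpha_m) - \alpha_m \;=\; \delta\,\frac{\lambda^j - 1}{\lambda - 1} \;+\; O(\delta^2).
\]
When $r\nmid j$, the factor $(\lambda^j-1)/(\lambda-1)$ is a $p$-adic unit, so $v_p(g^j(\alpha_m)-\alpha_m) = v_p(\delta)=:k$; when $j=r$ the valuation strictly exceeds $k$. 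Hence the orbit of $0$ has period $n$ modulo $p^i$ for $i\le k$ and period $rn$ modulo $p^i$ for $i>k$, matching the theorem's structure. In the infinite-orbit case the same framework shows $(n_i)$ grows with $i$ and is precisely the eventual cycle length of the orbit computed modulo $p^i$.

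The main obstacle will be step two: extracting from the Pezda / Morton--Silverman toolkit the precise admissible period list, and in particular handling the $p=3$ exception where $\widetilde\lambda=1$ and the linear term in the local expansion vanishes mod $p$. There the quadratic term $\phi$ governs the return dynamics, and one must verify that the wild ramification permitted by Pezda in small residue characteristic produces exactly one extra factor of $p$ --- no more --- before re-running the valuation comparison of the third paragraph to identify the transition exponent $k$.
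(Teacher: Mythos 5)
Your proposal is essentially sound and rests on the same two external pillars as the paper --- Pezda's classification of $(*)$-cycle lengths and the Morton--Silverman period formula --- so your second step is in substance the paper's Corollary \ref{mult} together with the remark following the proof of Theorem \ref{preper}; in particular, the $p=3$ difficulty you flag (where $\widetilde\lambda=1$) is settled there by Pezda's exclusion of $(*)$-cycles of length $6$ and $9$ in $\mathbb{Z}_3$, so the extra factor is exactly $3$ and occurs only once. Where you genuinely diverge is in the supporting steps. For the tail you use the elementary fact that $f_c$ is two-to-one away from the critical point and that the mod-$p$ cycle avoids the critical residue, whereas the paper (Lemma \ref{affine} and Proposition \ref{fixed tail}) argues on the tree: away from $0$ the induced local maps have degree one, hence permute residue classes, so $f^m(0)$ remains periodic at every level. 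Your expansion $g^j(\alpha_m)-\alpha_m=\delta\,(\lambda^j-1)/(\lambda-1)+O(\delta^2)$ then makes the transition level $k$ and the direct jump from cycle length $n$ to $rn$ explicit; the paper obtains this stratification more qualitatively from Corollary \ref{mult}, so your valuation bookkeeping (after checking the $O(\delta^2)$ bound is uniform in $j$, which holds since $g\in\mathbb{Z}_p[z]$ and the orbit stays in $\alpha_m+p^k\mathbb{Z}_p$) is, if anything, a sharper treatment of that point.

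One concrete caveat: as written, your tail argument only covers orbits that are finite over $\mathbb{Z}_p$, because the deduction $\alpha_{m'-1}=-\alpha_{m'+n'-1}$ from $f_c(\alpha_{m'-1})=f_c(\alpha_{m'+n'-1})$ uses that $\mathbb{Z}_p$ is a domain. The theorem, however, asserts that the tail equals $m$ modulo every power $p^i$, and in the infinite-orbit case this is the main content (the paper's Proposition \ref{fixed tail}); your closing sentence about $(n_i)$ does not address it. You need the mod-$p^i$ version: from $x\not\equiv y$ and $x^2\equiv y^2 \pmod{p^i}$ one only gets $(x-y)(x+y)\equiv 0 \pmod{p^i}$, but here the two candidate preimages of the first periodic residue class both reduce mod $p$ to the same element of the cycle, which is nonzero because $m>0$, and $p$ is odd, so $x+y$ is a unit and $x\equiv y\pmod{p^i}$ after all. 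With that injectivity-on-residue-classes statement in place --- which is exactly the mechanism behind the paper's local-degree-one argument --- your outline does yield the full theorem, including the orbit type $(m,n_i)$ at every level when the orbit is infinite.
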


\begin{remark}
For every prime $p$, there are only finitely many PCF polynomials of the form $z^2+c$, with $c$ in $\mathbb{Z}_p$, so there is a uniform bound on $k$ depending only on $p$.  See Section \ref{small} for further comments.
\end{remark}

\begin{remark}
In both the periodic and strictly pre-periodic work, the case of $p=2$ is treated separately.  A brief discussion of what happens in $\mathbb{Z}_2$ is included in Section \ref{p2}.
\end{remark}

With the $p$-adic ultrametric, we may visualize $\mathbb{Z}_p$ as the set of ends of a rooted tree.  Each vertex of the tree is a closed disk with rational radius of the form $p^{-n}$, and two vertices are connected by a branch if the disks have consecutive radii (see Section \ref{setting} for details).  Just as Hubbard trees are defined in $\mathbb{C}$, we define the \textit{critical orbit tree} for a polynomial $f\in\mathbb{Z}_p[z]$ as the convex hull of the critical orbit and the induced dynamical action on this sub-tree of $\mathbb{Z}_p$.  The \textit{critical orbit tree (mod $p$)} is the subset consisting of the residue classes (mod $p$) and the induced dynamical system on this restricted tree.  Theorem \ref{preper} and the previous work done on the periodic case imply the following about critical orbit trees in $\mathbb{Z}_p$:

\begin{theorem}\label{tree}
Let $p\geq 3$ and suppose 0 has finite orbit for $f_c(z)=z^2+c$, $c\in\mathbb{Z}_p$. 
\begin{enumerate}
\item If 0 is periodic of exact period $n$, the critical orbit tree coincides with the critical orbit tree (mod $p$).  It is a finite tree with a single vertex of degree $n$, and $f_c$ acts on the $n$ end points by a cyclic permutation.
\item If 0 is strictly pre-periodic with orbit type $(m,n)$, the critical orbit tree either coincides exactly with the critical orbit tree (mod $p$) or it differs by one instance of branching.  
\end{enumerate}

\end{theorem}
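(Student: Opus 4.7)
The approach is to combine Theorem~\ref{preper} with the periodic-case results recalled in the introduction to determine how the orbit of $0$ distributes among the mod~$p^i$ disks, then translate that distribution into the combinatorial structure of the convex hull. The key structural observation is that an internal vertex of the critical orbit tree corresponds to a least common ancestor (in the tree of $\mathbb{Z}_p$) of two orbit points, so counting how many orbit points lie inside each mod~$p^i$ disk completely determines the tree.

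For part~(1), the cited results imply that if $0$ is periodic of exact period $n$ over $\mathbb{Z}_p$, then its mod-$p$ period is also $n$. Hence the $n$ orbit points $0,c,f_c(c),\dots,f_c^{n-1}(c)$ lie in $n$ distinct residue classes mod~$p$, and the least common ancestor of any two of them is the whole disk $\mathbb{Z}_p$. Therefore the convex hull has the root as its unique internal vertex with $n$ leaves attached, $f_c$ permutes these leaves cyclically because it does so on the orbit itself, and the mod-$p$ tree exhibits the same star, so the two trees coincide.

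For part~(2), apply Theorem~\ref{preper} to the $\mathbb{Z}_p$-orbit type $(m,n)$: either the mod-$p$ orbit type is also $(m,n)$ (case (a)), or it is $(m,n')$ with $n = rn'$ for some $r \geq 2$ dividing $p-1$ (or $r=p$ if $p=3$), with a well-defined transition level $k$ (case (b)). In case (a), the argument of part~(1) shows that all $m+n$ orbit points lie in distinct mod~$p$ classes, producing a star with $m+n$ leaves that matches the mod-$p$ tree. In case (b), the $m$ tail points again lie in distinct mod~$p$ classes, whereas the $rn'$ cycle points distribute among the $n'$ periodic residue classes with exactly $r$ points per class (by equivariance of the reduction map under $f_c$ combined with cyclicity of the mod-$p$ $n'$-cycle). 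Within each such periodic class, Theorem~\ref{preper} guarantees that the $r$ lifts agree mod~$p^k$ but separate mod~$p^{k+1}$, so their least common ancestor is a single mod-$p^k$ disk of degree $r+1$. These are the only additional internal vertices beyond the root; they all sit at the single depth~$k$ and form one orbit under the induced $f_c$-action, giving the ``one instance of branching'' by which the tree differs from the mod-$p$ star.

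The main subtlety lies in making the phrase ``one instance of branching'' precise: strictly speaking $n'$ new branching vertices appear, but they all occur at the single depth $k$ and are related by the cyclic $f_c$-action, so they constitute one coherent dynamical event. Verifying that no further branching arises at any other depth reduces to the stability content of Theorem~\ref{preper}, namely that the orbit type modulo $p^i$ is $(m,n')$ for all $i \leq k$ and $(m,rn')$ for all $i > k$ with no intermediate splitting; confirming this absence of hidden branching between levels is the main technical point of the argument.
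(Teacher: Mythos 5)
Your proposal is correct and follows essentially the same route as the paper: part (1) via the fact that exact period $n$ over $\mathbb{Z}_p$ forces exact period $n$ mod $p$, giving a star at the Gauss point permuted cyclically, and part (2) via Theorem \ref{preper}, with the $r$ lifts of each of the $n'$ periodic residues agreeing mod $p^k$ and separating mod $p^{k+1}$, producing the single layer of branching. Your remark that the ``one instance of branching'' really consists of $n'$ branch vertices all at the same depth $k$, related by the $f_c$-action, matches exactly how the paper's proof describes it.
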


Motivated by results in complex dynamics, this article is the first step in a project to obtain a complete description of all possible critical orbit trees for parameters $c$ in $\mathbb{Z}_p$, $p\geq2$.

The complete list of finite critical orbit trees for $p=3$ is shown in Figure \ref{3trees}.  Note that the $(2,3)$ tree is the same as the $(2,1)$ tree (mod $3$), and then branches once below (mod $3^2)$.  We provide the complete list of finite critical orbit trees for $p=5$ and $p=7$ in Section \ref{small}.

\begin{figure}[h]
\centerline{ \includegraphics[width=3.5in]{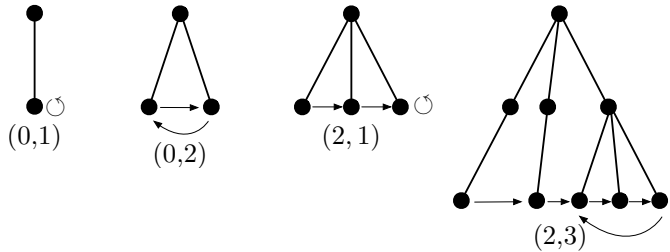}}
 \caption{The 4 distinct finite critical orbit trees in $\mathbb{Z}_3$, with corresponding critical orbit type labeled below.  Note that the trees are only drawn (mod $p^k$) when the full tree in $\mathbb{Z}_p$ agrees with the truncated tree.  In this case we say that the orbit is \textit{resolved} (mod $p^k$).\protect\label{3trees}}
\end{figure}

We are not only interested in the structure of finite critical orbit trees in $\mathbb{Z}_p$, however.  Using the technique of linearization outlined in \cite{Olof}, it is possible to determine the structure of infinite critical orbit trees corresponding to parameters near PCF points in $\mathbb{Z}_p$, based on the proximity of the PCF parameters.  In this article we look closely at the dynamics of $\mathbb{Z}_3$ parameters inside $\mathcal{M}_3$, near the PCF point $c=-2$.  In Section \ref{p3} we prove the following theorem:

\begin{theorem}\label{c-2}
Let $p=3$ and consider the map $f_c(z)=z^2+c$ for $c\in\mathbb{Z}_3$.  In $D(-2, 1/9)$, if a parameter $c$ is such that 
\[c\equiv -2 \mbox{ (mod } 3^k) \qquad \mbox{ but } \qquad c\not\equiv -2 \mbox{ (mod }3^{k+1}),\]
then 0 has orbit type $(2,3^i)$ (mod $3^{k+i}$) for $i\geq 1$, for the corresponding map $f_c(z) = z^2 + c$.
\end{theorem}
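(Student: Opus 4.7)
The plan is to move to a coordinate centered at the parabolic fixed point $z=2$ and to prove by induction on $v_3(n)$ that the $n$-th iterate of $f_c^2(0)$ has the expected $3$-adic displacement. Writing $w=z-2$ and $\epsilon=c+2$, the hypotheses $c\in D(-2,1/9)$, $c\equiv -2\pmod{3^k}$ but $c\not\equiv -2\pmod{3^{k+1}}$ give $\epsilon=3^k\delta$ with $\delta\in\mathbb{Z}_3^{\times}$ and $k\geq 2$. In this coordinate $f_c$ becomes $g(w)=4w+w^2+\epsilon$, and $w_0:=f_c^2(0)-2=-3\epsilon+\epsilon^2$ satisfies $v_3(w_0)=k+1$. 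Since $f_c(0)=c\equiv -2\pmod{3}$ while $f_c^j(0)\equiv 2\pmod{3}$ for all $j\geq 2$, the orbit of $0$ modulo $3^{k+i}$ is automatically pre-periodic with pre-period exactly $2$, so the claim reduces to proving
\[v_3\bigl(g^n(w_0)-w_0\bigr)=k+v_3(n)\qquad\text{for every }n\geq 1.\]

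For the base case $v_3(n)=0$, compare $g$ to the linearization $\tilde g(w):=4w+\epsilon$, which iterates to
\[\tilde g^n(w_0)-w_0=(4^n-1)(w_0+\epsilon/3)=(4^n-1)(-8\epsilon/3+\epsilon^2).\]
Since $k\geq 2$ the second factor has valuation $k-1$, and the lifting-the-exponent lemma gives $v_3(4^n-1)=1$, so the linearized quantity has valuation exactly $k$. The nonlinear error $E_n:=g^n(w_0)-\tilde g^n(w_0)$ obeys $E_{j+1}=4E_j+(g^j(w_0))^2$, and an elementary induction shows $v_3(g^j(w_0))\geq k$ for all $j$, hence $v_3(E_n)\geq 2k$, so $v_3(g^n(w_0)-w_0)=k$ exactly.

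For the inductive step from $m$ to $m+1$, pick $n'$ with $v_3(n')=m$, set $G:=g^{n'}$, $x:=G(w_0)-w_0$ (of valuation $k+m$ by hypothesis), and $\mu:=G'(w_0)$. Polynomial Taylor expansion gives $G(w_0+y)-w_0=x+\mu y+O(y^2)$, and iterating three times yields
\[G^3(w_0)-w_0=x(1+\mu+\mu^2)+O(x^2).\]
The chain rule $\mu=\prod_{j=0}^{n'-1}g'(g^j(w_0))$, with each factor $4+2g^j(w_0)\equiv 4\pmod{3^k}$, forces $\mu\equiv 4^{n'}\equiv 1\pmod 3$. The identity
\[1+\mu+\mu^2=(\mu-1)^2+3\mu\]
then gives $v_3(1+\mu+\mu^2)=1$, since $v_3(3\mu)=1$ while $v_3((\mu-1)^2)\geq 2$. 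Combined with $v_3(O(x^2))\geq 2(k+m)\geq k+m+2$ (using $k\geq 2$), this forces $v_3(G^3(w_0)-w_0)=k+m+1$, completing the induction.

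The main obstacle is making the inductive step work uniformly in $m$. The naive estimate $v_3(\mu-1)=m+1$ coming from $\mu\approx 4^{n'}$ and LTE breaks once $m+1\geq k$, because the $3^k$-level correction from $\prod(1+g^j(w_0)/2)$ can compete at the same valuation. The identity $1+\mu+\mu^2=(\mu-1)^2+3\mu$ sidesteps this by isolating the $3\mu$ summand, whose valuation depends only on the robust fact that $\mu\equiv 1\pmod 3$, a condition that holds regardless of how large $m$ is relative to $k$.
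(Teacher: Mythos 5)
Your proof is correct, but it follows a genuinely different route from the paper's. The paper works at the level of disk dynamics: it first computes a local map to show the cycle length multiplies by $3$ at level $k+1$, then invokes the non-Archimedean Siegel theorem of Herman--Yoccoz together with Lindahl's explicit estimates to produce a linearization disk of radius at least $3^{-3/2}$ around the indifferent fixed point $x=(1+\sqrt{1-4c})/2$ (a priori in an extension of $\mathbb{Q}_3$), verifies $|f_c^2(0)-x|\leq 3^{-2}$ so the critical orbit enters that disk, and finally analyzes the model map $z\mapsto\lambda z$, proving $\lambda^{3^{n-1}}\equiv 1\pmod{3^n}$ but $\lambda^{3^{n-1}}\not\equiv 1\pmod{3^{n+1}}$, so each level adds a factor of $3$ to the period. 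You bypass linearization entirely: conjugating by the translation $w=z-2$ (the fixed point of $f_{-2}$, so you never need $x$ itself or any field extension), you prove the exact valuation identity $v_3\bigl(g^n(w_0)-w_0\bigr)=k+v_3(n)$ by induction on $v_3(n)$, with the base case coming from the affine comparison map $4w+\epsilon$ plus a crude bound on the quadratic error, and the inductive step from the Taylor expansion of $G=g^{n'}$ at $w_0$ together with $1+\mu+\mu^2=(\mu-1)^2+3\mu$, which needs only $\mu\equiv 1\pmod 3$ (this is indeed the right way to dodge the interference between the estimate on $4^{n'}-1$ and the level-$k$ corrections to $\mu$, as you note). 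Your argument buys elementarity and self-containedness, and the valuation identity immediately gives the exact cycle length $3^i$ modulo $3^{k+i}$; the paper's argument buys a conceptual picture that is meant to be reused (the same linearization-disk analysis is what the author plans to apply near the other strictly pre-periodic parameter in $D(1,1/9)$). One phrasing to tighten: the mod-$3$ residue pattern $0,1,2,2,\dots$ only gives pre-period at least $2$; exactness of the tail also uses that $f_c^2(0)$ is actually periodic modulo $3^{k+i}$, which your identity supplies with $n=3^i$ --- a wording issue, not a gap.
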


The structure of this article is as follows.  In Section \ref{setting} we describe the setting.  In Section \ref{tools} we include some technical results that will be used to prove Theorems \ref{preper} and \ref{tree}.  We prove Theorems \ref{preper} and \ref{tree} in Section \ref{proof} and include a discussion of existing work.  Section \ref{small} contains the complete list of finite critical orbit trees for $p=5$ and $p=7$, and in Section \ref{p3} we prove Theorem \ref{c-2}.  We conclude the paper with a brief discussion of the case of $p=2$ in Section \ref{p2}. 

\subsection*{Acknowledgements}  The author would like to thank her advisor, Laura DeMarco, for her tireless guidance and supervision, and Rob Benedetto, Ben Hutz, and Rafe Jones for their many helpful email exchanges which all contributed to this project.

%***************************
\section{The Setting}\label{setting}
%*************************

Throughout this article, we exploit the natural tree structure of $\mathbb{Z}_p$, which may be viewed as a subtree of the Berkovich projective line over $\mathbb{C}_p$.  Each point, or vertex, of the $\mathbb{Z}_p$ tree is a closed disk with rational radius belonging to the valuation group $|\mathbb{Z}_p^*| = \{p^{-n}\}$, where $|\cdot|$ is a non-Archimedean absolute value.  We denote these points by $D(a, r) = \{x: |x-a|\leq r\}$, or simply by a center, $a$, if the radius is clear.  Two points $D(a_1,r_1)$ and $D(a_2,r_2)$, with $r_1> r_2$, are connected by a line segment, or branch, if they are such that $a_2\equiv a_1$ (mod ${r_1}^{-1}$).  Every point $D(a,r)$ other than the root of the tree has $p+1$ edges branching off of it, one for each of the $p$ elements of the residue field $\mathbb{Z}_p/p\mathbb{Z}_p\simeq \mathbb{F}_p$ lying below it, and one connecting it to the point $D(\bar{a},pr)$ above it.

Because $\mathbb{Z}_p = \{ \alpha \in\mathbb{Q}_p : |\alpha| \leq 1\}$, we may consider the top of the tree, the root, as the closed disk of radius 1 centered at 0, known as the Gauss point in Berkovich space parlance.  If one were to follow each branch down to the bottom of the tree, one would find all of the elements of $\mathbb{Z}_p$, each at the end of exactly one branch of the tree.  See \cite{Silverman} to compare Figure \ref{Z3tree} to the Berkovich projective line.

\begin{figure}[h]
\centerline{ \includegraphics[width=6in]{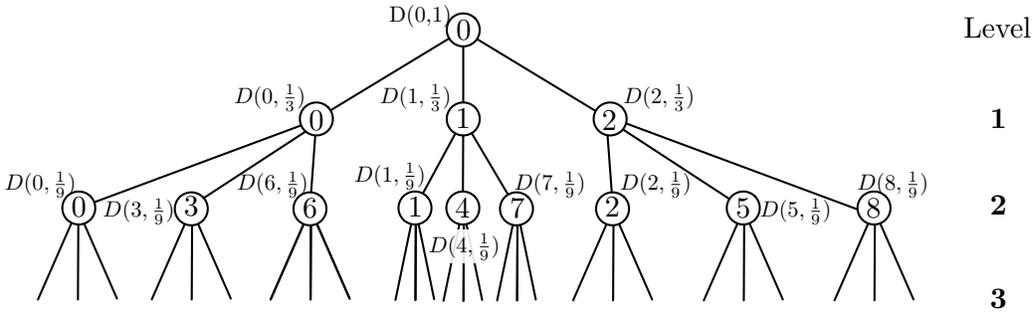}}
 \caption{The top of the $\mathbb{Z}_3$ tree.\protect\label{Z3tree}}
\end{figure}

We are interested not only in the action of our map $f_c(z) = z^2 + c$ on the ends of the tree ($\mathbb{Z}_p$), but also on the vertices.  By \cite{Juan} (Proposition 2.4), this action is well defined.  The induced map on the vertices, which is the reduction map of $f_c$ after a change of coordinates, will have positive degree.  We may then study the local dynamics of $f_c$ by considering the iterates of this induced map.  See Lemma \ref{affine} for more details.

The $\mathbb{Z}_p$ tree has an induced metric defined on the vertices $\zeta = D(a,r)$.  We define the combinatorial distance between 2 points connected by an edge, $\zeta_1= D(a_1, r_1) \supseteq \zeta_2= D(a_2,r_2)$ by $\rho(\zeta_1, \zeta_2) = log_p(r_1) - log_p(r_2)$.  Recall that in a non-Archimedean setting, any point in a disk may be considered as the center, so that $D(a_2, r_1) = D(a_1, r_1)$, for example.  This notion of distance may be extended to 2 points not connected by an edge by defining $\zeta_1 \vee \zeta_2$ to be the smallest disk that contains both $\zeta_1$ and $\zeta_2$.  Then $\rho(\zeta_1, \zeta_2) =\rho(\zeta_1, \zeta_1 \vee \zeta_2) + \rho(\zeta_2, \zeta_1 \vee \zeta_2)$.  This is  also known as the path metric on Berkovich hyperbolic space \cite{Baker}.

Lastly, when we refer to the ${k}^{th}$ \textit{level} of the $\mathbb{Z}_p$ tree, this includes all of the disks of radius $p^{-k}$, of which there are $p^k$.  The \textit{next level} would then be the ${k+1}^{st}$ level, the disks of radius $p^{-(k+1)}$ lying directly below those of radius $p^{-k}$.

%*****************************
\section{Preliminary Results}\label{tools}
%******************************

This section contains the technical machinery necessary for the proofs of Theorems \ref{preper} and \ref{tree}, and for a discussion of the previous work done on the (mod $p$) periodic case.  Some results are adapted from other papers (see \cite{Benedetto}, \cite{Jones}, \cite{Morton}, \cite{Morton2}, \cite{Pezda}, and \cite{Juan}), and the language may have been modified slightly for our purposes.  In general, fix a prime $p\geq3$, and consider  $f_c(z)=z^2+c$, for $c\in\mathbb{Z}_p$.  Some of the following will hold for parameters in spaces larger than $\mathbb{Z}_p$, and other results may also apply to $p=2$.  Any exceptions are indicated.

This first result is included in \cite{Jones} Corollary 1.5 (see Lemma \ref{jones1.5} below).  We give an alternate proof here, to highlight the fact that the attracting cycle is in the same residue class as 0.

\begin{lemma}\label{attracting}
If 0 is periodic of exact period $n$ under iteration of $f_c$ (mod $p$), then there exists an attracting cycle of exact period $n$ for $f_c$ over $\mathbb{Z}_p$.
\end{lemma}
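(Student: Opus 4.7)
The plan is to apply Hensel's lemma to the polynomial $G(z) = f_c^n(z) - z$ at $z = 0$, producing a periodic point of $f_c^n$ in $\mathbb{Z}_p$ congruent to $0$ modulo $p$, and then to read off the cycle length and multiplier from this lift.

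First I would verify the hypotheses of Hensel's lemma at $0$. The assumption that $0$ has exact period $n$ for $f_c$ modulo $p$ gives $G(0) = f_c^n(0) \equiv 0 \pmod{p}$. For the derivative condition, I compute by the chain rule
\[
(f_c^n)'(0) = \prod_{i=0}^{n-1} f_c'\!\left(f_c^i(0)\right) = 2^n \prod_{i=0}^{n-1} f_c^i(0),
\]
and since the $i=0$ factor is $f_c^0(0)=0$, we get $(f_c^n)'(0)=0$. Hence $G'(0)=-1$, which is a unit in $\mathbb{Z}_p$. Hensel's lemma then yields a unique $a \in \mathbb{Z}_p$ with $a \equiv 0 \pmod{p}$ and $f_c^n(a) = a$. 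In particular, the cycle lies in the same residue class structure as the orbit of $0$ modulo $p$.

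Next I would check that the exact period of $a$ is $n$. The reduction of the forward orbit of $a$ modulo $p$ coincides with the forward orbit of $0$ modulo $p$, which by hypothesis has exact period $n$; therefore no $n' < n$ can satisfy $f_c^{n'}(a) = a$, while $f_c^n(a) = a$ is guaranteed by construction.

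Finally I would show the cycle is attracting by computing the multiplier. Again by the chain rule,
\[
\lambda = (f_c^n)'(a) = 2^n \prod_{i=0}^{n-1} f_c^i(a).
\]
Every $f_c^i(a)$ lies in $\mathbb{Z}_p$, hence has absolute value at most $1$, and the factor $f_c^0(a) = a$ satisfies $|a| \leq 1/p$ because $a \equiv 0 \pmod{p}$. Since $|2^n| = 1$ for $p \geq 3$, this forces $|\lambda| \leq 1/p < 1$, so the cycle is attracting. There is no real obstacle here; the whole argument hinges on the single observation that $0$ is the critical point of $f_c$, which is exactly what makes the Hensel derivative condition trivially satisfied and simultaneously guarantees attraction through the factor $a$ appearing in the multiplier.
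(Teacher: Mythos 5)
Your proposal is correct and follows essentially the same route as the paper: apply Hensel's lemma to $f_c^n(z)-z$ at $0$ (using that $0$ is the critical point so $(f_c^n)'(0)=0$ and the derivative condition is a unit), rule out smaller periods by reducing mod $p$ against the exact period $n$ of $0$, and conclude attraction because the lifted point itself appears as a factor of the multiplier. The only cosmetic difference is that you make the chain-rule computation and the bound $|\lambda|\leq 1/p$ explicit, where the paper states the same facts more tersely.
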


\begin{proof}
Fix $f_c(z)=z^2+c$ and suppose that $f_c^n(0)\equiv 0$ (mod $p$), so 0 is a root of $g(z) = f^n(z)-z$ (mod $p$).  The derivative is $g'(z) = (f^n)'(z) - 1$, so $g'(0) = (f^n)'(0) - 1 \equiv p - 1$ (mod $p$).  Since $p-1$ will always be nonzero (mod $p$), the conditions for Hensel's lemma are satisfied.  There exists a unique $w\in \mathbb{Z}_p$ such that $g(w) = 0$ and $w\equiv 0 $ (mod $p$).  So $f^n(w) - w = 0$, and thus $w$ is periodic of period $m$ such that $m|n$.  It remains to show that $m=n$.

However, for any $m<n$ with $m|n$, we must have $f^m(w)-w\neq 0$ since $n$ is the least positive integer such that $f^n(0)\equiv 0 $ (mod $p$).  If $f^m(w) - w = 0$ with $w\equiv 0 $ (mod $p$), then $f^m(0) \equiv 0 $ (mod $p$), a contradiction.  So $w$ is periodic of exact period $n$.

Further, $w\equiv 0$ (mod $p$) implies that $|w|_p<1$.  Since $w| (f^n)'(w)$, we have $|(f^n)'(w)|_p <1$.  So $w$ is an attracting periodic point, of exact period $n$.
\end{proof}

Calculating the critical orbit of $f_c$ (mod $p$) is equivalent to calculating the critical orbit of the reduced map $\tilde{f_c}$ for $c \in \mathbb{Z}_p$.  Thus, the following result (\cite{Jones} Corollary 1.5) implies that an attracting cycle exists only for those $f_c$ such that the critical point is periodic (mod $p$).

\begin{lemma}\emph{\textbf{(Jones \cite{Jones})}}\label{jones1.5}
Let $c \in D=\{\alpha\in\mathbb{C}_p: |\alpha|\leq 1\}$. Then $f_c$ has a unique attracting cycle if 0 is periodic under iteration of $\tilde{f_c}$ and no attracting cycles otherwise. Moreover, if 0 is periodic of primitive period $n$ under iteration of $\tilde{f_c}$, then the unique attracting cycle of $f_c$ has primitive period $n$.
\end{lemma}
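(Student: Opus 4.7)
The plan is to split the statement into three assertions and dispatch each: existence of an attracting cycle of matching primitive period when $0$ is periodic under $\tilde{f_c}$; the converse, that an attracting cycle forces $0$ to be periodic modulo $\mathfrak{m}_{\mathbb{C}_p}$; and uniqueness. For existence I would simply cite Lemma \ref{attracting}, whose Hensel-lemma argument carries over verbatim when $c$ is allowed to range over the closed unit disk of $\mathbb{C}_p$, since the argument only uses completeness and the residue field structure of a non-Archimedean local ring.

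The heart of the proof is the converse and uniqueness, both of which rest on a single multiplier calculation. Given an attracting cycle $\{w_0,\ldots,w_{n-1}\}$, I would write its multiplier
\[
\lambda = (f_c^n)'(w_0) = \prod_{j=0}^{n-1} 2 w_j,
\]
and exploit that $|\lambda|_p < 1$ together with $|2|_p = 1$ (here we implicitly use $p\ge 3$, in line with the convention for this section) to conclude that some $|w_j|_p < 1$. Then $w_j \equiv 0$ modulo the maximal ideal of $\mathbb{C}_p$, and reducing the recursion gives $w_{j+k} \equiv \tilde{f_c}^k(0)$ for every $k$. The periodicity of the cycle now forces $\tilde{f_c}^n(0) \equiv 0$, so $0$ is periodic under $\tilde{f_c}$ of some primitive period $m$ dividing $n$. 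This establishes the ``otherwise'' clause by contrapositive.

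For uniqueness and for $m = n$, I would feed $m$ back into Lemma \ref{attracting} to obtain an attracting cycle of primitive period $m$ with a representative $w_\ast \equiv 0$; the converse just proved supplies, for any other attracting cycle, a second point $w_\dagger \equiv 0$ in its orbit. Both are then roots of $g(z) = f_c^N(z) - z$ reducing to $0$, where $N$ is any common multiple of the two periods, and a direct check shows $g'(0) \equiv -1$ mod $\mathfrak{m}_{\mathbb{C}_p}$ because the orbit of $0$ revisits $0$ and contributes a zero factor to $(f_c^N)'(0)$. Hensel's lemma then yields a unique lift, so $w_\ast = w_\dagger$, the two cycles share a point and hence coincide, and the common primitive period is $m = n$. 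The main obstacle I foresee is the residue deduction in the converse: it genuinely requires $|2|_p = 1$, which is exactly why $p = 2$ behaves differently and is relegated to Section \ref{p2}; a secondary subtlety is checking that the Hensel hypothesis $g'(0) \not\equiv 0$ is insensitive to the choice of the common multiple $N$, which is what permits comparing two cycles of \emph{a priori} different periods within a single Hensel application.
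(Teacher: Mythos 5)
Your proof is correct in substance, but it is not the paper's proof: the paper offers none for this lemma, simply quoting Jones's Corollary 1.5, and its only in-house argument is Lemma \ref{attracting}, which covers the existence half (and adds the observation, used later, that the attracting cycle lies in the residue classes of the reduced critical orbit). What you do differently is supply the remaining parts yourself: the ``no attracting cycles otherwise'' clause from the multiplier product $\lambda=\prod_j 2w_j$ with $|2|_p=1$, and uniqueness together with the primitive-period claim from Hensel uniqueness applied to $g(z)=f_c^N(z)-z$ at $0$, where indeed $g'(0)=-1$ exactly --- note this is simply because the $i=0$ factor $f_c'(0)=0$ annihilates $(f_c^N)'(0)$, so no ``revisiting'' of $0$ is needed --- whence the unique root of $g$ in the open unit disk must be common to the Lemma \ref{attracting} cycle and to any other attracting cycle. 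This buys a self-contained proof where the paper leans on \cite{Jones} (and later on Theorem \ref{rivera} for the basin statement, which your argument does not and need not reproduce), at the cost of a few steps you should make explicit: (i) before deducing that some $|w_j|_p<1$, record that every finite periodic point of $f_c$ with $|c|_p\le 1$ lies in the closed unit disk (if $|w|_p>1$ then $|f_c(w)|_p=|w|_p^2>|w|_p$ and the orbit escapes), since otherwise the product $\prod_j|w_j|_p<1$ would not force a small factor; (ii) the restriction to odd $p$ is genuinely necessary, as you note --- for $p=2$ and $c=-2$ both fixed points $2$ and $-1$ are attracting, so uniqueness fails --- and it matches the standing convention $p\ge 3$ of this section; (iii) tidy the $m$ versus $n$ bookkeeping in your last step: what the Hensel argument shows is that every attracting cycle passes through the unique root attached to the primitive period of $0$ under $\tilde{f_c}$, hence coincides with the cycle produced by Lemma \ref{attracting} and inherits its primitive period $n$.
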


Lemma \ref{jones1.5} together with the following theorem, adopted from Proposition 4.6 of \cite{Juan}, illustrates why the critical cycle length remains fixed at $n$ when calculated mod higher powers of $p$, even when the critical orbit is infinite: because 0, the only critical point of $f_c$, is in the basin of attraction of the attracting $n$-cycle.

\begin{theorem}\emph{\textbf{(Rivera-Letelier \cite{Juan})}}\label{rivera}
Suppose a map $\varphi\in\mathbb{C}_p(z)$ has good reduction. Then if $p$ does not divide deg$(\varphi)$ then the immediate basin of attraction of any attracting cycle contains a critical point of $\varphi$.

\end{theorem}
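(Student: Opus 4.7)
The plan is to reduce to a fixed-point statement and then argue by Koenigs linearization plus a non-Archimedean Riemann--Hurwitz count. First, if $\{z_0,\dots,z_{n-1}\}$ is the attracting cycle, replace $\varphi$ by $\psi=\varphi^n$; good reduction persists and the coprimality $p\nmid\deg\psi=(\deg\varphi)^n$ still holds, while each $z_i$ becomes an attracting fixed point of $\psi$. By the chain rule, a critical point of $\varphi$ in the immediate basin of the $\varphi$-cycle corresponds to a critical point of $\psi$ in the immediate basin of one of the $z_i$, and conversely. So it suffices to show: for a tame map $\psi$, the immediate basin $B$ of any attracting fixed point $z_0$ contains a critical point of $\psi$.

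Set $\lambda=\psi'(z_0)$. If $\lambda=0$, then $z_0$ is itself a critical point of $\psi$ and we are done. Otherwise $0<|\lambda|<1$; assume for contradiction that $B$ contains no critical point of $\psi$. Construct the Koenigs linearization: the unique formal power series $h$ with $h(0)=z_0$, $h'(0)=1$, and $\psi\circ h=h\circ m_\lambda$, where $m_\lambda(w)=\lambda w$. Under the tameness hypotheses, $h$ converges on an open disk $D(0,r_0)\subset\mathbb{C}_p$ and maps it injectively onto an open set $U_0\subset B$ on which $\psi$ is conjugate to $m_\lambda$. Because $B$ contains no critical point, the inverse branch of $\psi$ through $z_0$ extends across $U_0$ and pulls it back to a strictly larger disk $U_1\subset B$ (corresponding to $D(0,r_0/|\lambda|)$ via $h$). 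Iterating this pullback step produces an exhaustion $U_0\subset U_1\subset U_2\subset\cdots\subset B$ and extends $h$ to an injective $\psi$-equivariant map $\mathbb{C}_p\to B$; moreover every point of $B$ has forward orbit eventually in $U_0$, so $B=\bigcup_k U_k$ and $\psi|_B$ is a bijection.

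To derive a contradiction, apply the non-Archimedean Riemann--Hurwitz formula on the Berkovich projective line to the proper self-map $\psi|_B:B\to B$: in the tame setting of Rivera-Letelier, one has $\deg(\psi|_B)-1$ equal to the number of critical points of $\psi$ in $B$ counted with multiplicity. Under our assumption this forces $\deg(\psi|_B)=1$, consistent with the bijection above. But then $\psi|_B$ would be a contracting self-bijection of a proper Fatou component: using the invariant path metric on the Berkovich hyperbolic space, $\psi|_B$ would have to preserve distances between classical type-I points of $B$, contradicting $|\lambda|<1$ (which forces strict contraction on a neighborhood of $z_0$). This contradiction shows that $B$ must contain a critical point of $\psi$, and unwinding the reduction recovers a critical point of $\varphi$ in the immediate basin of the original cycle.

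The main obstacle is justifying the steps that rest on good reduction and $p\nmid\deg\varphi$: showing that each pullback $U_k$ is a genuine disk (not a higher-genus affinoid), that the inverse branches of $\psi$ extend univalently throughout $B$ in the absence of classical critical points, and that the Riemann--Hurwitz count and invariant metric on $B$ behave as in the archimedean case. The coprimality hypothesis rules out wild ramification at type II points of the Berkovich tree that could otherwise hide ``invisible'' critical behavior away from classical points, and good reduction ensures the reduction map has the expected degree; together these provide the foundational machinery that Rivera-Letelier develops and on which the argument essentially depends.
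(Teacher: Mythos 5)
The paper does not actually prove this statement -- it is imported verbatim from Rivera-Letelier's thesis (Proposition 4.6 of \cite{Juan}) -- so your sketch has to stand on its own, and as written it does not, because the genuinely non-Archimedean content is exactly what you defer. The pivotal step ``$B$ contains no classical critical point, hence the inverse branch through $z_0$ extends, the pullbacks $U_k$ are ever larger disks, and $\psi|_B$ is a bijection'' is false in general over $\mathbb{C}_p$: wild ramification can make an analytic map a $p$-to-$1$ self-map of a disk with no critical points in that disk (for instance $f(z)=z^p+cz$ with $1/p<|c|<1$ on the unit disk), so ``no critical points'' does not yield univalent inverse branches, and there is no monodromy theorem to invoke. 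Moreover, $p\nmid\deg\varphi$ alone does not exclude this scenario: when $\deg\varphi>p$ the local degree on the basin can still be divisible by $p$. What excludes it is the interplay with good reduction (separability of $\tilde{\varphi}$, the fact that the reduced attracting point is a critical point of $\tilde{\varphi}$ since $|\lambda|<1$, and the lifting of critical points of $\tilde{\varphi}$ to the corresponding residue classes), none of which you prove; assigning it to ``the machinery Rivera-Letelier develops'' is circular, because that is precisely the content of the theorem. Similarly, your Riemann--Hurwitz count on $B$ presupposes that the immediate basin is a disk (or at least an analytic domain of known Euler characteristic); that $B$ is an open disk -- for a good-reduction map, a residue class -- is itself a theorem of Rivera-Letelier and is nowhere established in your argument.

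The closing contradiction is also mis-stated: the path (hyperbolic) metric is defined only on $\mathbb{H}=\mathbb{P}^1_{\mathrm{Berk}}\smallsetminus\mathbb{P}^1(\mathbb{C}_p)$, and classical type-I points are at infinite distance, so ``$\psi|_B$ preserves distances between classical type-I points'' in that metric is not meaningful. Once one knows $B$ is a disk, the correct and simpler mechanism is the non-Archimedean Schwarz lemma: an injective analytic surjection of a disk onto itself scales $|\cdot|_p$ by exactly $|\psi'(z_0)|$, forcing $|\lambda|=1$ and contradicting attraction (unless $\lambda=0$, in which case $z_0$ itself is the critical point). That dichotomy -- degree one on $B$ gives an isometry, degree at least two plus tameness gives a critical point by the Riemann--Hurwitz count on the disk -- is the actual skeleton of the cited proof. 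Your Koenigs-linearization pullback is borrowed from the complex setting, where the contradiction ultimately comes from the basin omitting the Julia set; here, with good reduction, the Julia set is empty and no Picard/Liouville-type obstruction is available, so that route cannot be completed as written.
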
 

This brings up the notion of good reduction for a map $\varphi: \mathbb{P}^1(\mathbb{C}_p) \rightarrow \mathbb{P}^1(\mathbb{C}_p)$ with coefficients in $\mathcal{O}_{\mathbb{C}_p}$.  The reduced map $\tilde{\varphi}$ is obtained by reducing each coefficient modulo the maximal ideal of $\mathcal{O}_{\mathbb{C}_p}$, $\mathfrak{p}=\{\alpha \in \mathbb{C}_p : |\alpha|<1\}$.  We then say that $\varphi$ has \textit{good reduction} if deg($\varphi$) = deg($\tilde{\varphi})$.  For more details, see (\cite{Silverman} \S 2.3).  Note that if $f_c$ is defined over $\mathbb{Z}_p$ then it has good reduction at all primes $p$.

Lastly, by (\cite{Benedetto} Theorem 1.5), we see that the critical point is either in the attracting cycle, or it has infinite orbit and is attracted to the cycle.  The critical point will not eventually land on the attracting cycle.

\begin{theorem}\emph{\textbf{(Benedetto, Ingram, Jones, and Levy \cite{Benedetto})}}\label{benedetto}
Let $K$ be an algebraically closed field which is complete with respect to a non-trivial non-Archimedean absolute value $|\cdot|$.  Let $p\geq 0$ be the residue characteristic of $K$, and suppose either that $p=0$ or that $\varphi \in K(z)$ has degree $d<p$.  If $\gamma$ is an $n$-periodic point of $\varphi$ with 
\[
0<|(\varphi^n)'(\gamma)|<1,
\]
then there is a critical point of $\varphi$ with infinite orbit that is attracted to the cycle containing $\gamma$.  
\end{theorem}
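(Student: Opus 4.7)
The plan is to reduce to the case of a fixed point via iteration, establish a K\oe nigs linearization around the attracting fixed point, and then exploit the \emph{maximal} such linearization disk to extract a critical point whose orbit is forced to be infinite.

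To set up, I would replace $\varphi$ by $\varphi^n$ so that $\gamma$ becomes a fixed point with multiplier $\lambda := \varphi'(\gamma)$ satisfying $0 < |\lambda| < 1$; any critical point of $\varphi^n$ with infinite orbit attracted to $\gamma$ corresponds to a critical point of $\varphi$ with infinite orbit attracted to the full cycle. A non-Archimedean K\oe nigs linearization theorem then produces a disk $D$ centered at $\gamma$ and an analytic change of coordinates $\psi \colon D \to \psi(D)$ with $\psi(\gamma)=0$ and $\psi \circ \varphi \circ \psi^{-1}(w) = \lambda w$. Two immediate consequences I would record are that $\varphi(D) \subseteq D$, and that for any $x \in D \setminus \{\gamma\}$ the iterates $\varphi^k(x)$ are pairwise distinct and tend to $\gamma$, so any such $x$ automatically has infinite forward orbit.

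The heart of the argument is to locate a critical point whose orbit enters $D \setminus \{\gamma\}$. I would take $D$ to be the \emph{maximal} linearization disk: $\psi^{-1}$ can be continued along inverse branches of $\varphi$ as long as no critical value is encountered, so the obstruction to enlarging $D$ must be that some iterate $\varphi^j(c_\ast)$, for a critical point $c_\ast$ of $\varphi$, lies on the boundary of $D$. Pulling this critical orbit point inward under one further application of $\varphi$ places $\varphi^{j+1}(c_\ast)$ inside $D$; if it happens to land on $\gamma$ itself, I would repeat the maximality analysis one level deeper. Once $\varphi^k(c_\ast) \in D \setminus \{\gamma\}$ for some $k$, the linearization forces the orbit of $c_\ast$ from that point onward to be infinite and attracted to $\gamma$. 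Since $\lambda = \prod_{i=0}^{n-1}\varphi'(\gamma_i) \neq 0$, no cycle element is itself critical, so $c_\ast$ does not lie in the cycle of $\gamma$ and is a genuine witness for the conclusion.

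The main obstacle will be making the ``maximal linearization disk is obstructed by an iterated critical value'' step rigorous in the Berkovich-analytic setting, and ensuring the descent in the degenerate case (where the candidate critical orbit lands directly on $\gamma$) terminates with an infinite-orbit critical point. Proving that $D$ admits a well-defined maximal extension, and that the only obstruction to further extension is the presence of an iterated critical value, requires a careful analysis of $\psi$ as a germ along inverse branches of $\varphi$. The hypothesis $p = 0$ or $d < p$ enters precisely here: it guarantees tame ramification, so the classical complex-analytic arguments for extending local conjugations along inverse branches translate to the non-Archimedean setting without pathological contributions from wild ramification in residue characteristic $p$.
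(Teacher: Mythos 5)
This statement is not proved in the paper at all: it is quoted as Theorem 1.5 of Benedetto--Ingram--Jones--Levy \cite{Benedetto} and used as a black box, so there is no internal argument to compare yours against; your sketch has to stand on its own. Its architecture is the expected one and largely sound: reduction to a fixed point of $\varphi^n$, non-Archimedean K\oe nigs linearization (which needs only $0<|\lambda|<1$, not tameness), and extension of the inverse $\sigma=\psi^{-1}$ of the linearizer by lifting through inverse branches of $\varphi^n$, the obstruction being an iterated critical value. Two small cautions: after replacing $\varphi$ by $\varphi^n$ the global degree $d^n$ generally exceeds $p$, so the tameness you invoke must be formulated locally (each local degree of $\varphi^n$ is a product of local degrees of $\varphi$, all $<p$ and hence prime to $p$); and the case where $\sigma$ extends to all of $\mathbb{A}^1$ must be excluded separately, which is where a tame Riemann--Hurwitz count on disks, i.e.\ the hypothesis $d<p$, genuinely enters.

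The genuine gap is the step that separates this theorem from Rivera-Letelier's older result that the immediate basin contains a critical point (Theorem \ref{rivera} in this paper): you must rule out that the critical point you produce is an iterated preimage of $\gamma$, hence preperiodic with finite orbit. Your handling of this --- push the boundary point one step inward, and if it lands on $\gamma$ ``repeat the maximality analysis one level deeper'' --- is not an argument: it is unclear what the deeper analysis produces, why it yields a different critical point, or why the descent terminates, and you flag this yourself. Worse, the step it rests on (a point at distance exactly the critical radius maps strictly inside, generically away from $\gamma$) can fail precisely at that radius, where the Weierstrass degree jumps: for $\varphi^n(z)=\lambda z+z^2$ the boundary point $z=-\lambda$ of the linearization disk maps exactly onto $\gamma=0$. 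The missing idea is to track the linearizing coordinate of the obstructing critical \emph{value} rather than the position of the critical point: if $\sigma$ extends to $D(0,s)$ for every $s<R$ but to no larger disk, then since the extension is obtained by lifting $w\mapsto\sigma(\lambda w)$ through $\varphi^n$, the obstruction is a critical value $v$ of $\varphi^n$ lying in the already-constructed domain with $|\psi(v)|\geq|\lambda|R>0$. Injectivity of $\psi$ on that domain gives $v\neq\gamma$, and since the forward orbit of $v$ is conjugate to multiplication by $\lambda\neq0$ acting on a nonzero coordinate, it can never reach $\gamma$ and is therefore infinite and attracted to the cycle. With this observation the degenerate case you worry about simply cannot occur for the obstructing critical point, and no descent is needed; without it, the proof of the actual content of the theorem (the infinitude of the critical orbit) is missing.
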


Thus ends the machinery necessary to prove Proposition \ref{per} in the next section, for periodic critical points (mod $p$).  For strictly pre-periodic critical points, we have the following:

\begin{lemma}\label{affine}
If 0 is strictly pre-periodic under iteration of  $f_c$ (mod $p$), then the iterates of $f_c$ behave locally as linear affine transformations, and $f_c$ is a local isometry.
\end{lemma}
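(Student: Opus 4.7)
The plan is to combine two observations: first, that strict pre-periodicity of $0$ mod $p$ forces every forward iterate of $0$ to be a unit in $\mathbb{Z}_p$; second, that at any unit $z_0$ the derivative $f_c'(z_0) = 2z_0$ has norm one (since $p \geq 3$), so a short Taylor expansion makes $f_c$ behave as an affine linear isometry on any sufficiently small disk about $z_0$. Iterating then immediately transports this local behavior all along the critical orbit.

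For the first step, I would argue by contradiction: suppose $f_c^i(0) \equiv 0 \pmod{p}$ for some $i \geq 1$. Reducing, $\tilde{f_c}^i(0) = 0$, so $0$ is a periodic point of $\tilde{f_c}$ of period dividing $i$, contradicting the hypothesis that $0$ is strictly pre-periodic mod $p$. Hence $|f_c^i(0)|_p = 1$ for every $i \geq 1$, so every forward iterate of the critical point lies in the unit sphere of $\mathbb{Z}_p$.

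For the second step, fix any unit $z_0 \in \mathbb{Z}_p$ and expand
\[
f_c(z_0 + u) \;=\; f_c(z_0) + 2z_0\, u + u^2.
\]
Since $p \geq 3$ we have $|2z_0|_p = 1$, so whenever $|u|_p < 1$ the quadratic term has strictly smaller norm than the linear term, and the ultrametric inequality yields $|f_c(z_0 + u) - f_c(z_0)|_p = |u|_p$. Thus for every $r < 1$ the map $f_c$ sends the disk $D(z_0, r)$ bijectively and isometrically onto $D(f_c(z_0), r)$, with leading-order action $u \mapsto 2z_0\, u$, the promised affine linear transformation with unit scaling factor. Applying this at each $z_0 = f_c^i(0)$ for $i \geq 1$ and composing shows the same property for every iterate $f_c^k$ on a small enough neighborhood of any point in the critical orbit.

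There is no real obstacle here beyond the initial reduction step: once one knows the forward orbit avoids the residue class of $0$, the conclusion is a direct consequence of $|2|_p = 1$ together with the expansion above. The only subtlety worth tracking is that when composing the local models at consecutive iterates one must match the radii of the source and target disks, but since each $f_c$ step is an isometry, this is pure bookkeeping.
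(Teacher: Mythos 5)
Your argument is correct, and its second half (the isometry computation via $f_c(z_0+u)-f_c(z_0)=2z_0u+u^2$ with $|2z_0|_p=1$ for $p\geq 3$) is essentially identical to the paper's. Where you differ is in how affine-linearity of the local model is justified: the paper passes to the rescaled reduction $g(z)=\tfrac{1}{p^k}\bigl(f^n(v+p^k z)-v\bigr)$ of the return map at a periodic vertex $v=f^m(0)$, bounds $\deg g\leq 2$ by good reduction, rules out degree $2$ because $v$ is away from the critical point, and invokes Rivera-Letelier's result that the induced map has positive degree to conclude $\deg g=1$; you instead read off the linear model $u\mapsto 2z_0u$ directly from the Taylor expansion, which is more elementary and self-contained, and you also make explicit (via the contradiction with strict pre-periodicity mod $p$) exactly why every forward iterate $f_c^i(0)$, $i\geq 1$, is a unit --- this is the precise content behind the paper's phrase ``a vertex away from the critical point,'' so your step 1 is a welcome clarification rather than a detour. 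Your approach buys explicitness (the linear coefficient is identified as $2z_0$, a unit) at the cost of handling the iterates by composition along the orbit, whereas the paper's coordinate change treats the return map $f^n$ in one stroke. One small caveat: you assert that $f_c$ maps $D(z_0,r)$ \emph{bijectively} onto $D(f_c(z_0),r)$, but your expansion only gives an isometric (hence injective) map into that disk; surjectivity would need Hensel/Newton applied to $2z_0u+u^2=w-f_c(z_0)$. This is harmless here, since the lemma only claims the affine local model and the isometry, and the bijection on residue classes used later follows from the degree-one reduced map.
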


\begin{proof}

Define $g$ to be the reduction of an iterate of $f_c$ after a change of coordinates, so that $g$ fixes some point $v$ in the orbit of $0$.  We choose $v$ to be the start of the periodic cycle, $f^m(0)$, when the critical orbit is calculated (mod $p^k$).  If $v$ has reduced cycle length $n$, then $g$ is the reduction of $f^n$.  Under the change of coordinates, 
\[g(z) = \frac{1}{p^k}(f^n(v+p^k\cdot z) - v)\qquad (\mbox{mod } p).\]

Because $f$ has good reduction, the local map $g$ will be of degree at most 2.  We may rule out the possibility of deg($g) =2$, however, since we have chosen a vertex away from the critical point.  By (\cite{Juan} Proposition 2.4) we know it has positive degree, so deg$(g)=1$.   Thus, $g(z) = az+b$ for some $a,b\in\mathbb{F}_p$.

Further, we can show that $f_c$ is a local isometry on vertices away from the critical point.  The distance between two points is preserved:  Choose $\alpha_1, \alpha_2\in\mathbb{Z}_p$ such that $|\alpha_i|=1$ for $i=0,1$.  Suppose $|\alpha_2 - \alpha_1| = p^{-r}$, so without loss of generality we may write $\alpha_2 = \alpha_1 + a\cdot p^r + \displaystyle \sum_{i=1}^{\infty} a_i p^{r+i}$, with $a \in\mathbb{F}_p^*$.  Then $|f(\alpha_2)-f(\alpha_1)|=|\alpha_2^2 + c - (\alpha_1^2+c)|=|\alpha_1^2 + 2\alpha_1a\cdot p^r + O(p^{r+1}) - \alpha_1^2| = |p^r|| 2\alpha_1a+O(p)|=p^{-r} $ since $|2a\alpha_1|=1$.

\end{proof}

The next proposition demonstrates that when 0 is strictly pre-periodic of orbit type $(m,n)$ (mod $p$), the tail length remains fixed at $m$ when the orbit is calculated modulo higher powers of $p$, even if the orbit is infinite over $\mathbb{Z}_p$.

\begin{proposition}\label{fixed tail}
Suppose that 0 has strictly pre-periodic orbit type $(m,n)$ (mod $p$) for $f_c$, with $c$ in a finite extension of $\mathbb{Q}_p$.  Then 0 will have orbit type $(m, n_k)$ (mod $p^k$) for all $k\geq 1$, where $n_k$ is the length of the cycle that 0 lands on when its orbit is calculated (mod $p^k$).  
\end{proposition}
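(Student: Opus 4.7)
The plan is to show both inequalities $m_k \ge m$ and $m_k \le m$, where $(m_k, n_k)$ denotes the orbit type of $0$ under $f_c$ modulo $p^k$. The lower bound is immediate: if $f^{m_k + n_k}(0) \equiv f^{m_k}(0) \pmod{p^k}$ with $m_k$ chosen minimally, then reducing the congruence modulo $p$ shows that $f^{m_k}(0)$ already lies on the reduced cycle, and since the tail length modulo $p$ is exactly $m$ this forces $m_k \ge m$.

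For the upper bound, set $v = f^m(0)$ and work inside the disk $D(v, p^{-1})$. Because $0$ has orbit type $(m,n)$ modulo $p$, every iterate $f^{m+jn}(0)$ is congruent to $v$ modulo $p$, so the entire forward cycle under $g := f^n$ stays inside $D(v, p^{-1})$. The key observation is that none of the $n$ reduced cycle points $v, f(v), \dots, f^{n-1}(v)$ can be congruent to $0$ modulo $p$: if one of them were, then $0$ itself would lie on the reduced cycle, contradicting the hypothesis $m \ge 1$. Hence Lemma \ref{affine} applies at each of these $n$ cycle points, and the composition $g = f^n$ restricts to a local isometry from $D(v, p^{-1})$ onto itself.

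A pigeonhole argument then completes the proof. Since $g$ is an isometry of $D(v, p^{-1})$, it permutes the finite collection of radius-$p^{-k}$ sub-disks contained in $D(v, p^{-1})$, so the forward $g$-orbit of the specific sub-disk $D(v, p^{-k})$ must return to itself after finitely many iterations. That is, there exists $j_k \ge 1$ with
\[
f^{m + j_k n}(0) \;=\; g^{j_k}(v) \;\in\; D(v, p^{-k}),
\]
which is precisely the statement $f^{m + j_k n}(0) \equiv f^m(0) \pmod{p^k}$. Thus $f^m(0)$ is already periodic modulo $p^k$, giving $m_k \le m$, and combined with the lower bound we obtain $m_k = m$. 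The cycle length modulo $p^k$ is then, by definition, the quantity $n_k$ named in the statement.

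The main obstacle is justifying that the composite $g = f^n$ is genuinely an isometry on all of $D(v, p^{-1})$; this depends on the fact that $0$ avoids the reduced cycle, which is exactly the content of the strictly pre-periodic hypothesis $m \ge 1$. Once that obstruction is removed, the affine behavior of Lemma \ref{affine} glues cleanly across the $n$ cycle points, and the finite permutation argument applies uniformly in $k$.
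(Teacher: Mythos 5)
Your proof is correct and follows essentially the same route as the paper's: both arguments rest on Lemma \ref{affine} to see that an iterate of $f$ maps the disk $D(f^m(0),1/p)$ isometrically into itself and hence acts as a permutation on its sub-disks (residue classes), forcing the class of $f^m(0)$ to be periodic at every level, while the easy reduction-mod-$p$ observation shows the tail length cannot drop below $m$. The only cosmetic difference is that the paper inducts level by level using the cycle length $n_k$ (mod $p^k$), whereas you run a single pigeonhole argument with $g=f^n$ directly on the radius-$p^{-k}$ sub-disks for each $k$.
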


\begin{proof}

First note that the tail length $m$ cannot shrink as the orbit type of 0 is calculated modulo higher powers of $p$, and so we need only to consider the possibility of the tail length growing.  We will show that the $m^{th}$ iterate of 0 is periodic at every level, by induction on the power of $p$.

Consider the disk of radius $1/p$ around $f^m(0)$.  Let $\zeta=D(f^m(0), 1/p)$, and observe that the $n^{th}$ iterate of $f$ fixes the entire disk $\zeta$:  Because $f^n(f^m(0)) = f^m(0)$, and $f$ preserves the radius of the disk (see the proof of Lemma \ref{affine} above), we have $f^n: \zeta \longrightarrow \zeta$.

In local coordinates, $f^n_{\zeta}$ is a linear map (see Lemma \ref{affine}), so, in particular, it is a bijection on its residue classes.  Thus, the $\zeta$-class of $f^m(0)$ is periodic (mod $p$), and so $f^m(0)$ is periodic (mod $p^2$). 

Now assume that $f^m(0)$ is periodic of period $n_k$ (mod $p^k$), to show that it is periodic (mod $p^{k+1}$):  Let $\zeta_k = D(f^m(0), 1/p^k)$, and again observe that the $n_k^{th}$ iterate of $f$ fixes $\zeta_k$.  By Lemma \ref{affine}, $f^{n_k}_{\zeta}$ will be a linear map in local coordinates, so it is a bijection on the residue classes below $\zeta_k$.  Thus, the $\zeta_k$-class of $f^m(0)$ is periodic (mod $p^k$), and so $f^m(0)$ is periodic (mod $p^{k+1}$).
\end{proof}

The following is an adaptation of Theorem 2 from \cite{Pezda}, which will be used in calculating the examples in Section \ref{small}.  It explains why we see only particular finite critical orbit types for given primes: it is not possible to construct a polynomial with critical orbit type $(m,n)$ for all positive integers $m,n$, when working in $\mathbb{Z}_p$.  There are certain restrictions on the cycle length, $n$, in addition to the restriction that $m<p$.

In order to state the theorem it is necessary to describe what is referred to as a $(*)$-cycle in \cite{Pezda}.  If $x_0, x_1, \ldots, x_{k-1}$ is a $k$-cycle in $\mathbb{Z}_p$, we say that it is a $(*)$\textit{-cycle} if $|x_i-x_j|<1$ for all $i\neq j$.  This equates to the cycle reducing to a single point (mod $p$).

\begin{theorem}\emph{\textbf{(Pezda \cite{Pezda})}}\label{pezda2}
A $(*)$-cycle of length $n$ exists in $\mathbb{Z}_p$ if and only if $n$ is a divisor of $p-1$ except for $p=2,3$, in which case $n$ can be any integer not exceeding $p$.
\end{theorem}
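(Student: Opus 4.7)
The plan is to study $(*)$-cycles through the local linearization of the polynomial at the common residue class (in the spirit of Lemma \ref{affine}) together with explicit constructions for the converse. Throughout, suppose $x_0, x_1, \ldots, x_{n-1}$ is a $(*)$-cycle for some polynomial $\varphi \in \mathbb{Z}_p[z]$; after translation we may assume the common residue class is $0$, so that $x_i \in p \mathbb{Z}_p$ for each $i$.

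For the necessity, I would use the factorization $\varphi(u) - \varphi(v) = (u-v) Q(u,v)$ with $Q \in \mathbb{Z}_p[u,v]$ and $Q(w,w) = \varphi'(w)$. The consecutive differences $d_i = x_{i+1} - x_i$ satisfy the telescoping recursion $d_i = d_{i-1} Q(x_i, x_{i-1})$, so going around the cycle yields $\prod_{i=1}^n Q(x_i, x_{i-1}) = 1$ together with $\sum_{i=0}^{n-1} d_i = 0$. Since each $x_i \equiv 0 \pmod{p}$, we have $Q(x_i, x_{i-1}) \equiv \lambda \pmod{p}$ where $\lambda = \tilde{\varphi}'(0)$, and these two identities reduce to $\lambda^n \equiv 1 \pmod{p}$ and $\sum_{k=0}^{n-1} \lambda^k \equiv 0 \pmod{p}$. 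This forces either (a) $\lambda \not\equiv 1 \pmod{p}$, in which case $\mathrm{ord}_{\mathbb{F}_p^*}(\lambda)$ divides both $n$ and $p-1$, or (b) $\lambda \equiv 1 \pmod{p}$ and $p \mid n$. Sharpening to the stated bound requires a second-order analysis: in case (a), one descends one level by examining $\varphi^{\mathrm{ord}(\lambda)}$ on the smaller disks of radius $p^{-1}$ around each cycle point and applying a Hensel-type lifting criterion modulo $p^2$, showing $n = \mathrm{ord}(\lambda) \mid (p-1)$; in case (b), the same second-order analysis rules out length-$p$ cycles when $p \geq 5$ but permits them when $p \in \{2,3\}$.

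For the sufficiency, construct explicit examples. When $n \mid (p-1)$, Hensel's lemma lifts a primitive $n$-th root of unity in $\mathbb{F}_p^*$ to $\zeta \in \mathbb{Z}_p^*$, and then $\varphi(z) = \zeta z$ has the $(*)$-cycle $\{p\zeta^i : 0 \leq i < n\} \subset p\mathbb{Z}_p$ of length $n$. For the small-prime exceptions: when $p = 2$, the map $\varphi(z) = 2 - z$ yields the $(*)$-cycle $\{0, 2\} \subset 2\mathbb{Z}_2$ of length $2$; and when $p = 3$ with $n = 3$, solving $f_c^3(0) = 0$ in the variable $c$ over $3\mathbb{Z}_3$ via Hensel's lemma yields $c \in 3\mathbb{Z}_3$ such that $\{0, c, c^2 + c\} \subset 3\mathbb{Z}_3$ is a $(*)$-cycle of length $3$ for $f_c(z) = z^2 + c$.

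The main obstacle is establishing the sharp necessary condition. The first-order constraints $\lambda^n \equiv 1$ and $\sum \lambda^k \equiv 0 \pmod{p}$ are not by themselves strong enough to force $n \mid (p-1)$ or to forbid length-$p$ cycles when $p \geq 5$; one must exploit the classical identity $\sum_{j \in \mathbb{F}_p} j^s \equiv 0 \pmod{p}$ for $0 < s < p-1$ inside an iterated Taylor expansion of the appropriate iterate of $\varphi$ restricted to each residue class. The needed cancellations leave a residual nonvanishing obstruction for $p \geq 5$ but collapse to zero for $p \in \{2,3\}$, because too few residues are available for the symmetry to produce a further constraint; this accounts for the exceptional behavior at the small primes.
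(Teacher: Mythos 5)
Your necessity argument is a plan, not a proof, and the missing part is exactly where the theorem lives. The telescoping identity $d_i=d_{i-1}Q(x_i,x_{i-1})$ and the resulting first-order congruences $\lambda^n\equiv 1$ and $\sum_{k=0}^{n-1}\lambda^k\equiv 0\pmod p$ are correct, but everything you then invoke to sharpen them --- the ``second-order analysis,'' the ``Hensel-type lifting criterion modulo $p^2$,'' the ``residual nonvanishing obstruction for $p\geq 5$'' --- is asserted rather than carried out, and these assertions are precisely the content of Pezda's Lemma 6, his proposition on lengths $p^{\alpha}$, and his Lemmas 9 and 10, which the paper's proof assembles by citation together with the reduction that a $(*)$-cycle of length $ab$ for $\varphi$ gives a $(*)$-cycle of length $a$ for $\varphi^{b}$. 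That reduction is also missing from your outline, and without it your dichotomy on $\lambda$ does not cover composite lengths: in case (b) you only know $p\mid n$, not $n=p$, and in case (a) lengths such as $n=\mathrm{ord}(\lambda)\cdot p$ (for example $p=5$, $\lambda$ of order $2$, $n=10$) satisfy both first-order congruences, so your claim that $n=\mathrm{ord}(\lambda)$ needs the deeper input you never supply. For $p=3$ you must in addition exclude $n=6$ and $n=9$ while allowing $n=3$ (separate lemmas in Pezda); your sketch does not address these at all.

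There is also a concrete error in the sufficiency direction for $p=3$, $n=3$. One has $f_c^3(0)=c^4+2c^3+c^2+c=c\,(c^3+2c^2+c+1)$, and for $c\equiv 0\pmod 3$ the cubic factor is a unit, so the only root of $f_c^3(0)$ in $3\mathbb{Z}_3$ is $c=0$; Hensel's lemma applied in that residue class returns $c=0$, which gives a fixed point, not a $3$-cycle. (Ultrametrically, for $c\in 3\mathbb{Z}_3\setminus\{0\}$ one gets $|f_c(0)|=|c|$, $|f_c^2(0)-f_c(0)|=|c|^2$, $|f_c^3(0)-f_c^2(0)|=|c|^3$, hence $|f_c^3(0)|=|c|\neq 0$.) This matches Example \ref{p3ex} of the paper: no $c\in\mathbb{Z}_3$ makes $0$ periodic of exact period $3$. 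The gap is repairable because Theorem \ref{pezda2} concerns arbitrary polynomials over $\mathbb{Z}_p$, not only $z^2+c$: for instance $\varphi(z)=3+z-\tfrac{1}{2}z(z-3)\in\mathbb{Z}_3[z]$ sends $0\mapsto 3\mapsto 6\mapsto 0$ and so realizes a $(*)$-cycle of length $3$. Your constructions $\zeta z$ for $n\mid(p-1)$ and $2-z$ for $p=2$ are fine.
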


\begin{proof}

This is the culmination of preliminary results from \cite{Pezda}, which are listed below.
\begin{itemize}
\item (\cite{Pezda} Lemma 6):
If $m$ is the length of a $(*)$-cycle in $\mathbb{Z}_p$ and $p\nmid m$, then $m|(p-1)$.

\item (\cite{Pezda} page 15 Proposition):
If there is a $(*)$-cycle of length $p^{\alpha}$, then $\alpha \leq 1$.

\item (\cite{Pezda} Lemma 9):

\begin{enumerate}
\item If $x_0, x_1, \ldots, x_{p-1}$ is a $(*)$-cycle in $\mathbb{Z}_p$ and $ |x_1-x_0| = p^{-d}$ then $(p-2)d\leq 1$.
\item If $p>3$ then there are no $(*)$-cycles of length $p$ in $\mathbb{Z}_p$.  In $\mathbb{Z}_3$ there are no $(*)$-cycles of length 9 and in $\mathbb{Z}_2$ there are no $(*)$-cycles of length 4.
\end{enumerate}

\item (\cite{Pezda} Lemma 10):
There are no $(*)$-cycles of length 6 in $\mathbb{Z}_3$.

\end{itemize}
Then since any $(*)$-cycle of composite length $k=ab$ for a map $f$ corresponds to a $(*)$-cycle of length $a$ for $f^b$, or a $(*)$-cycle of length $b$ for $f^a$, we get the statement of the theorem.  
\end{proof}

One consequence of Pezda's work is the following corollary, which explains why there are indeed a very restricted number of possible finite critical orbit types that arise in $\mathbb{Z}_p$.

\begin{corollary}\label{mult}
Let $p\geq 3$ and consider the critical orbit for $f_c(z)=z^2+c$, $c\in\mathbb{Z}_p$.  Suppose 0 has orbit type $(m,n)$ (mod $p$), $m\geq 1$.  Then the cycle length can multiply by $r|(p-1)$ at most once ($r$ may equal $p$ if $p=3$), so that 0 has finite orbit type ($m,rn)$.  If the cycle length multiplies again, 0 has infinite orbit.
\end{corollary}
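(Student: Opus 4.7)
My plan is to combine Lemma \ref{affine} with Theorem \ref{pezda2}, tracking the cycle length $n_k$ of $0$ modulo $p^k$ as $k$ grows. By Proposition \ref{fixed tail} the tail length stays $m$, so $f^{n_k}$ fixes the vertex $\zeta_k = D(f^m(0), p^{-k})$; by Lemma \ref{affine}, in the local coordinate $z = f^m(0) + p^k w$, its reduction is a linear map $\bar{g}_k(w) = \alpha_k w + \beta_k$ on $\mathbb{F}_p$. First I would observe that the growth factor $r_k = n_{k+1}/n_k$ equals the length of the $\bar{g}_k$-orbit of $w = 0$; a direct analysis of affine maps on $\mathbb{F}_p$ shows this orbit length is $1$ (no growth), the multiplicative order of $\bar{\alpha}_k$ in $\mathbb{F}_p^*$ (which divides $p-1$), or $p$ (exactly when $\bar{\alpha}_k = 1$ and $\bar{\beta}_k \neq 0$). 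Hence any single growth factor exceeding $1$ divides $p-1$ or equals $p$, with the latter ruled out for $p \geq 5$ by Theorem \ref{pezda2}.

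Next I would show that after the first growth occurs, the relevant local map degenerates to a translation, forcing any further growth to be by a factor of $p$. Suppose the first growth takes place at level $k$ with factor $r$, and write $F$ for the rescaled action of $f^{n_k}$ on $\zeta_k$. Iterating $\bar{g}_k$ in $\mathbb{F}_p$ yields $\bar{g}_k^{\circ r}(w) = w$: if $\bar{\alpha}_k \neq 1$ has order $r$ then the geometric sum $\bar{\beta}_k \sum_{i=0}^{r-1} \bar{\alpha}_k^i$ vanishes, while if $\bar{\alpha}_k = 1$ with $r = p$ then $p\bar{\beta}_k = 0$ in $\mathbb{F}_p$. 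Hence $F^{\circ r}(w) = w + p\,G(w)$ for some $G : \mathbb{Z}_p \to \mathbb{Z}_p$, and passing to any sub-disk of $\zeta_k$ via $w = w_0 + p u$ gives the local map $H(u) = u + G(w_0 + pu)$, whose reduction $\bar{H}(u) = u + \bar{G}(w_0)$ is a translation. Consequently, on every $\zeta_{k'}$ with $k' > k$ the reduction of $f^{r n_k}$ has linear coefficient equal to $1$, so by the analysis in the first paragraph any subsequent growth of $n_{k'}$ must be by a factor of $p$.

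Finally I would rule out any such second growth for finite orbits using Theorem \ref{pezda2}. If the orbit were finite after a second growth, the full cycle of length $rp \cdot n$ in $\mathbb{Z}_p$ would meet the mod-$p$ disk $D(f^m(0), 1/p)$ in exactly $rp$ points, all congruent mod $p$, forming a $(*)$-cycle of length $rp$ for $f^n$. For $p \geq 5$, Theorem \ref{pezda2} demands $rp \mid (p-1)$, which fails since $rp \geq p$. For $p = 3$ the only options are $r = 2$ and $r = 3$, producing $(*)$-cycles of length $6$ and $9$ respectively, both explicitly forbidden by Pezda's Lemmas 9 and 10 cited in the proof of Theorem \ref{pezda2}. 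Hence any second multiplication forces the orbit to be infinite, completing the proof. The main obstacle is the explicit bookkeeping in the middle paragraph: once one sees that the linear coefficient of the reduction necessarily collapses to $1$ after a full cycle of the affine action, the rest falls out mechanically from Pezda's classification of $(*)$-cycles.
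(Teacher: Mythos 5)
Your argument is correct, but it takes a more self-contained route than the paper. The paper's proof is a direct appeal to Pezda's structure theory: for a finite orbit, the points of the $\mathbb{Z}_p$-cycle lying below a single mod-$p$ cycle point form a $(*)$-cycle for $f^n$, Theorem \ref{pezda2} forces its length $r$ to divide $p-1$ (or to be at most $3$ when $p=3$), and Pezda's decomposition of cycle lengths ($k=ab$ with $a$ a $(*)$-cycle length and $b\leq p$) is quoted to conclude that $rn$ is the maximal finite length, so any further multiplication forces an infinite orbit. You instead derive the structure of the successive growth factors $n_{k+1}/n_k$ from the local affine dynamics of Lemma \ref{affine}: the first nontrivial factor is the order of the reduced multiplier (hence divides $p-1$) or is $p$, and after one full cycle of the affine action the linear coefficient of the local reduction collapses to $1$, so every later growth factor is $p$; Pezda's theorem is then invoked only to forbid $(*)$-cycles of $p$-divisible length in the finite case. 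In effect you reprove the relevant special case of the Morton--Silverman cycle-length theorem (Theorem \ref{MS}, which the paper only cites in the remark after Theorem \ref{preper}) rather than outsourcing the length structure to Pezda's Lemma 3. What this buys you is an explicit reason why the cycle length cannot multiply twice by two nontrivial divisors of $p-1$ --- the $ab$ decomposition alone would still permit a product $r_1r_2\mid(p-1)$, and your translation-collapse argument is exactly what rules this out --- at the cost of more bookkeeping than the paper's half-page citation-based proof. One small wording caveat: in your first paragraph, a growth factor of $p$ is not ruled out by Theorem \ref{pezda2} per se for $p\geq 5$ (it occurs at every level for infinite orbits, as in the paper's $(2,8\cdot 5^i)$ example); it is ruled out only under the finiteness hypothesis, by the same $(*)$-cycle argument you give in your last paragraph, so you should phrase it that way.
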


\begin{proof}
If 0 has finite orbit that is not resolved (mod $p$), it will be resolved when the orbit is calculated (mod $p^i$) for some $i\geq2$.  In this case each of the $n$ members of the (mod $p$) cycle has a $(*)$-cycle underneath it, when visualized in the $\mathbb{Z}_p$ tree.  Let the length of each $(*)$-cycle be $r$, with $r|p-1$ (or $r=3$ if $p=3$) by Theorem \ref{pezda2}.  The total length of the cycle will then be $rn$, and by (\cite{Pezda} Lemma 3) that is the maximum finite length allowed.  The length of any cycle in $\mathbb{Z}_p$ is $k=ab$, where $a$ is the length of a $(*)$-cycle and $b\leq p$.  If the length of the critical orbit cycle multiplies again when calculated (mod $p^j$), $j>i$, then the critical orbit must be infinite in $\mathbb{Z}_p$.
\end{proof}

Lastly, it is important to mention the following theorem of Morton and Silverman, which first appeared in \cite{Morton} (Theorem 1.1), although the proof is largely based on their work in \cite{Morton2}.  It provides some insight into understanding Corollary \ref{mult}, and shows that we not only have this phenomenon in $\mathbb{Z}_p$, but also in finite extensions of $\mathbb{Q}_p$.
  
\begin{theorem}\emph{\textbf{(Morton and Silverman \cite{Morton})}}\label{MS}
Let $K/\mathbb{Q}_p$ be a $p$-adic field with maximal ideal $\mathfrak{p}$, let $\varphi: \mathbb{P}^1 \rightarrow \mathbb{P}^1$ be a rational map of degree at least two defined over $K$ and with good reduction at $\mathfrak{p}$, and let $P\in\mathbb{P}^1(K)$ be a periodic point for $\varphi$ of minimal period $n$.  Define integers $m$ and $r$ by
\[
\begin{array}{rl}
m = & \tilde{\varphi} \mbox{-period of } \tilde{P} \mbox{ in } \mathbb{P}^1(\mathbb{F}_p),\\
r = & \mbox{multiplicative period of } (\tilde{\varphi}^m)'(\tilde{P}) \mbox{ in } \mathbb{F}_p^*.
\end{array}
\]
(If $(\tilde{\varphi}^m)'(\tilde{P}) = 0$, we set $r=\infty$.)  Then either 
\[
\begin{array}{l}
n =m, \mbox{ } n=mr, \mbox{ or } n=mrp^e \quad \mbox{ for some } e\geq 1.
\end{array}
\]
\end{theorem}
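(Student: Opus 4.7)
The plan is to first reduce to the case where $\tilde P$ is a fixed point of $\tilde\varphi$ by replacing $\varphi$ with $\psi = \varphi^m$ (which still has good reduction, with $\tilde\psi'(\tilde P) = \lambda$ of multiplicative order $r$). The claim then becomes: the minimal period $s = n/m$ of $P$ under $\psi$ lies in $\{1\} \cup \{r\} \cup \{rp^e : e \geq 1\}$. After a coordinate change in $\mathrm{PGL}_2(\mathcal{O}_K)$ moving $\tilde P$ to $0$ (such a change preserves good reduction), the entire $\psi$-orbit of $P$ lies in the open residue disk $D = \mathfrak{p}$, on which $\psi$ admits a convergent expansion $\psi(z) = a_0 + a_1 z + a_2 z^2 + \cdots$ with $a_0 \in \mathfrak{p}$, $a_i \in \mathcal{O}_K$, and $\tilde a_1 = \lambda$.

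If $\lambda = 0$ (the case $r = \infty$), then $|a_1| < 1$, so $\psi|_D$ is strictly contracting: $|\psi(z_1) - \psi(z_2)| \leq |a_1| \cdot |z_1 - z_2|$. The $p$-adic Banach fixed-point principle then gives a unique attracting fixed point in $D$, forbidding any periodic orbit of length $> 1$, so $s = 1$ and $n = m$.

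Next, suppose $\lambda \in \mathbb{F}_p^*$ and $s \geq 2$; I claim $r \mid s$. Consider $G(z) = \psi^s(z) - z$, a convergent power series on $D$ whose zero set contains the $s$ distinct $\psi$-orbit points of $P$. A chain-rule computation along an orbit lying in a single residue class gives $G'(Q) \equiv \lambda^s - 1 \pmod{\mathfrak{p}}$ for each such $Q$, since each factor $\psi'(\psi^i(P))$ reduces to $\tilde\psi'(0) = \lambda$. If $\lambda^s \not\equiv 1$, the reduced power series $\tilde G$ has $0$ as a simple zero, so Hensel's lemma (equivalently, the Newton polygon of $G$) produces a \emph{unique} root of $G$ in $D$, contradicting $s \geq 2$. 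Hence $\lambda^s \equiv 1 \pmod{\mathfrak{p}}$ and $r \mid s$.

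Write $s = rk$ and set $\chi = \psi^r$; on $D$ its expansion takes the form $\chi(z) = c_0 + (1 + c_1)z + c_2 z^2 + \cdots$ with $c_0, c_1 \in \mathfrak{p}$, so $\chi$ is tangent to the identity on $D$, and $P$ has minimal $\chi$-period $k$. The remaining task is to show $k \in \{1\} \cup \{p^e : e \geq 1\}$. This is the standard fact on the dynamics of a tangent-to-identity $p$-adic power series: the Newton polygon of $\chi^k(z) - z$ (or equivalently a formal logarithm argument after passage to a ramified extension in which $\chi$ linearizes) shows that $\chi^k - \mathrm{id}$ acquires new roots in $D$ only when $k$ picks up a further factor of $p$, because the leading nontrivial coefficient of $\chi^k(z) - z$ scales essentially like $k$ times that of $\chi(z) - z$. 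This Newton-polygon step is the main technical obstacle, being where the prime $p$ genuinely enters the combinatorics of possible cycle lengths. Assembling, $s \in \{1, r, rp^e\}$, yielding $n \in \{m, mr, mrp^e\}$ as claimed.
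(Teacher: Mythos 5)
The paper itself gives no proof of this statement---it is imported verbatim from Morton--Silverman---so your proposal can only be judged against the standard argument, which is indeed the route you follow: pass to $\psi=\varphi^m$, model $\psi$ by a power series with $\mathcal{O}_K$-coefficients on the residue disk of $\tilde P$, and split into the cases $\lambda=0$, $\lambda^s\not\equiv 1$, $\lambda^s\equiv 1$. Those first stages are essentially correct, with two small blemishes: the bound $|\psi(z_1)-\psi(z_2)|\le |a_1|\,|z_1-z_2|$ is not literally true (the correct factor is $a_1+a_2(z_1+z_2)+\cdots$, which need not be dominated by $|a_1|$, but it does lie in $\mathfrak{p}$, so the strict-contraction conclusion survives), and the Hensel/Newton-polygon appeal for uniqueness of the root of $G=\psi^s-\mathrm{id}$ in $D$ is more simply replaced by the factorization $G(Q_1)-G(Q_2)=(Q_1-Q_2)(u-1)$ with $u\equiv\lambda^s\pmod{\mathfrak p}$, which gives $r\mid s$ directly when $s\ge 2$.

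The genuine gap is the final step, which you yourself label ``the main technical obstacle'': the claim that a periodic point of a power series $\chi$ with derivative $\equiv 1\pmod{\mathfrak p}$ along its orbit has minimal period $1$ or $p^e$. This is exactly where the factor $p^e$ in the conclusion originates, and the sentence about the ``leading nontrivial coefficient of $\chi^k(z)-z$ scaling like $k$ times that of $\chi(z)-z$'' is a heuristic, not an argument: $0$ need not be a fixed point of $\chi$, so ``leading nontrivial coefficient'' is not even well defined, and controlling a single coefficient does not control the Newton polygon or Weierstrass degree of $\chi^k-\mathrm{id}$ on the whole disk. The standard completion is short and should be included: if the minimal $\chi$-period $k$ of $P$ had a prime factor $\ell\neq p$, replace $\chi$ by $\chi^{k/\ell}$, so that $P=Q_0,Q_1,\dots,Q_{\ell-1}$ is an $\ell$-cycle in $\mathfrak p$ for a series $\chi(z)=\sum_j c_j z^j$ with $c_1\equiv 1\pmod{\mathfrak p}$. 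Then $Q_{i+1}-Q_i=\chi(Q_i)-\chi(Q_{i-1})=(Q_i-Q_{i-1})\,u_i$ with $u_i=\sum_{j\ge 1}c_j\bigl(Q_i^{j-1}+Q_i^{j-2}Q_{i-1}+\cdots+Q_{i-1}^{j-1}\bigr)\equiv c_1\equiv 1\pmod{\mathfrak p}$, so by induction $Q_{i+1}-Q_i=(Q_1-Q_0)v_i$ with $v_i\equiv 1\pmod{\mathfrak p}$. Summing around the cycle gives $0=Q_\ell-Q_0=(Q_1-Q_0)\bigl(\ell+(\text{element of }\mathfrak p)\bigr)$, and since $\ell$ is a unit in $\mathcal{O}_K$ this forces $Q_1=Q_0$, contradicting $\ell\ge 2$. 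Hence $k$ is a power of $p$, and with this inserted your argument becomes a complete proof along the standard Morton--Silverman lines.
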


%****************************
\section{The Structure of the Critical Orbit Calculated (mod $p^i$)}\label{proof}
%*****************************

We begin this section with a discussion of the known case of when 0 is periodic (mod $p$) for $f_c$.

\begin{proposition}\label{per}
Let $p\geq 3$ and consider the critical orbit for $f_c(z)=z^2+c$, $c\in\mathbb{Z}_p$.  If 0 has orbit type $(0,n)$ (mod $p$), then either 0 is periodic of exact period $n$ or 0 has infinite orbit, with orbit type $(m_i, n)$ (mod $p^i$) for all $i\geq 1$.
\end{proposition}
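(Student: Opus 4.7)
The plan is to exploit the attracting-cycle machinery from Section \ref{tools}. First I would apply Lemma \ref{attracting} to produce an attracting periodic point $w\in\mathbb{Z}_p$ of exact period $n$ with $w\equiv 0\pmod{p}$; by Lemma \ref{jones1.5} this is the unique attracting cycle of $f_c$, so $0$ is either a member of it or a non-periodic point attracted to it. The whole argument then hinges on a dichotomy: either $w=0$ (giving case 1 of the proposition) or $w\neq 0$ (giving the infinite-orbit case).

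To dispose of the finite case, I would observe that the reduced cycle has exact length $n$ in $\mathbb{F}_p$, so the residues $\tilde{0},\tilde{f_c}(0),\ldots,\tilde{f_c}^{\,n-1}(0)$ are pairwise distinct and only one of them---namely $\tilde{0}$ itself---is the zero class. Hence if $0$ is periodic in $\mathbb{Z}_p$, it must coincide with the distinguished cycle point $w$, and so $0$ has exact period $n$. In the remaining case $w\neq 0$, no element of the cycle $\{w,f_c(w),\ldots,f_c^{n-1}(w)\}$ can equal $0$, since any zero cycle member would have to be the unique representative $w$ of the zero residue class. Therefore $(f_c^n)'(w)=2^n\prod_{j=0}^{n-1}f_c^j(w)$ is nonzero, and combined with $|(f_c^n)'(w)|<1$ this puts us squarely in the hypothesis of Theorem \ref{benedetto}. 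That theorem delivers a critical point of infinite orbit attracted to the cycle, and since $0$ is the unique critical point of $f_c$ we conclude that $0$ itself has infinite orbit with $f_c^k(0)$ converging $p$-adically to the cycle.

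To finish, I would read off the mod-$p^i$ orbit structure. Pairwise distinctness of cycle members mod $p$ persists mod $p^i$, so the reduction of the cycle to $\mathbb{Z}/p^i\mathbb{Z}$ retains exact length $n$. The $p$-adic convergence $f_c^k(0)\to$ cycle then forces, for every $i\geq 1$, a least $m_i\geq 0$ such that $f_c^{m_i}(0)$ lies on the cycle mod $p^i$; from that point on the orbit cycles with period exactly $n$, giving orbit type $(m_i,n)$ mod $p^i$. The main subtlety is the verification that $(f_c^n)'(w)\neq 0$ in the second case, since Theorem \ref{benedetto} genuinely requires a strictly non-super-attracting multiplier; this is exactly what the $w\neq 0$ split is designed to secure, and the remainder is a careful bookkeeping of reductions.
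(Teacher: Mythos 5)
Your argument is correct and follows essentially the same route as the paper: Hensel's lemma (Lemma \ref{attracting}) produces the attracting $n$-cycle in the residue class of $0$, uniqueness comes from Lemma \ref{jones1.5}, Theorem \ref{benedetto} supplies the infinite-orbit alternative, and the mod $p^i$ bookkeeping then yields orbit type $(m_i,n)$ at every level. The only deviations are mild streamlinings rather than a different route: you bypass Theorem \ref{rivera} by reading the attraction of $0$ to the cycle directly off the conclusion of Theorem \ref{benedetto}, and your dichotomy $w=0$ versus $w\neq 0$ (via Hensel uniqueness) makes explicit the hypothesis $0<|(f_c^n)'(w)|<1$, which the paper leaves implicit when it invokes that theorem.
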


\begin{proof}

Proposition \ref{per} follows directly from the results given in Section \ref{tools}.  Since 0 is periodic (mod $p$), there is an attracting cycle of period $n$ for $f_c$ by Lemma \ref{attracting}.  In fact, from the proof of this lemma, it is evident that the attracting point is in the same residue class as 0.  Similarly, the other $n-1$ elements of the attracting cycle are in the same classes as the elements of the reduced critical orbit (mod $p$), for $n>1$.  

This is the only attracting cycle for $f_c$, of any cycle length, by Corollary \ref{jones1.5}.  By Theorem \ref{rivera}, 0 is in the basin of attraction of that attracting cycle, since it is the only critical point for $f_c$. And lastly, if 0 is not actually periodic over $\mathbb{Z}_p$, it will remain strictly attracted to the attracting cycle and never eventually join it, by Theorem \ref{benedetto}.  So if 0 does not have exact period $n$, it will have infinite orbit, but at every level of the $\mathbb{Z}_p$ tree, 0 will be pre-periodic with orbit type $(m_i, n)$.  The critical point will always land on exactly one representative of the residue class for each of the $n$ elements in the attracting cycle, and remain in that cycle once it gets there, but it will take longer and longer to reach the cycle as the tail length grows when the orbit is calculated modulo higher powers of $p$.
\end{proof}

Now we consider the case of when 0 is strictly pre-periodic (mod $p$) for $f_c$.

\subsection*{Theorem \ref{preper}.} \textit{Let} $p\geq 3$ \textit{and consider the critical orbit for} $f_c(z)=z^2+c$, $c\in\mathbb{Z}_p$\textit{.  If 0 is strictly pre-periodic with orbit type} $(m,n)$ \textit{(mod }$p$), $m>0$\textit{, then either 0 has orbit type} $(m,n)$ \textit{over} $\mathbb{Z}_p$ \textit{or there exists some }$k\geq 1$ \textit{and} $r|(p-1)$ \textit{in} $\mathbb{Z}$ \textit{such that} 
\begin{enumerate}
\item \textit{0 has orbit type} $(m,n)$ \textit{(mod }$p^i$\textit{) for all} $i\leq k$\textit{, and} 
\item \textit{0 has orbit type} $(m, rn)$ \textit{(mod} $p^j$\textit{) for all} $j> k$ \textit{, with} $r=p$ if $p=3$.
\end{enumerate}
\textit{Otherwise, 0 has infinite orbit, with orbit type} $(m, n_i)$ \textit{(mod} $p^i$\textit{) for all} $i\geq 1$\textit{, where} $n_i$ \textit{is the length of the cycle on which 0 lands when its orbit is calculated (mod }$p^i$\textit{).}

\begin{proof}

Theorem \ref{preper} also follows directly from the results in Section \ref{tools}.  Since 0 is pre-periodic (mod $p$), it will have orbit type $(m, n_i)$ (mod $p^i$) for all $i\geq 1$ by Proposition \ref{fixed tail}.  The tail length will be fixed at $m$ when the critical orbit is calculated modulo higher powers of $p$, even if the orbit is infinite.

If the critical orbit is finite, by Corollary \ref{mult} the (mod $p$) cycle length may multiply by some number at most once.  That number $r$ must divide $(p-1)$ because the resulting cycle lying below each element of the original (mod $p$) cycle will be a $(*)$-cycle, unless $p=3$, in which case $r$ may be 3.  If the cycle length does multiply by $r$, there will be some maximal power of $p$, say $p^k$, such that the critical orbit type is $(m,n)$ (mod $p^k$) and $(m, rn)$ (mod $p^{k+1}$).  The cycle length will not increase again, so the critical orbit type will be $(m, rn)$ (mod $p^{k+i})$ for all $i\geq 1$, and in $\mathbb{Z}_p$.

These are the only possibilities for strictly pre-periodic critical portraits.  When calculated in $\mathbb{Z}_p$, the critical orbit type will be ($m,1)$, ($m, n)$ with $n<p$, or $(m, rn)$.  If $p=3$, we may have $r=p=3$, by Theorem \ref{pezda2}; otherwise, $r|(p-1)$.
\end{proof}

\begin{remark}
By Theorem \ref{MS}, we know that $r$ is the multiplicative order of the multiplier of $\widetilde{f_c^m(0)}$ in ${\mathbb{F}_p}^*$.
\end{remark}

We are now ready to explore the implications of Proposition \ref{per} and Theorem \ref{preper} on the tree structures of finite critical orbits in $\mathbb{Z}_p$.

\subsection*{Theorem \ref{tree}.}  \textit{Let }$p\geq 3$ \textit{and suppose 0 has finite orbit for }$f_c(z)=z^2+c$, $c\in\mathbb{Z}_p$. 
\begin{enumerate}
\item \textit{If 0 is periodic of exact period }$n$\textit{, the critical orbit tree coincides with the critical orbit tree (mod }$p$\textit{).  It is a finite tree with a single vertex of degree }$n$\textit{, and }$f_c$\textit{ acts on the} $n$ \textit{end points by a cyclic permutation.}
\item \textit{If 0 is strictly pre-periodic with orbit type }$(m,n)$\textit{, the critical orbit tree either coincides exactly with the critical orbit tree (mod }$p$\textit{) or it differs by one instance of branching.}  
\end{enumerate}

\begin{proof}

1. By Proposition \ref{per}, if the orbit of 0 is periodic of exact period $n$, it will be periodic of exact period $n$ (mod $p$).  In particular, each of the $n$ elements of the cycle belongs to a distinct residue class (mod $p$), and so the structure of the critical orbit tree (mod $p$) will be exactly the same structure as the tree in $\mathbb{Z}_p$.  Further, the critical orbit tree will be a subtree of the top of the $\mathbb{Z}_p$ tree, which consists of a vertex (the Gauss point) of degree $p$.  (See Figure \ref{Z3tree}, through the first level.)  The critical orbit sub-tree will simply be the Gauss point with degree $n$ ($\leq p$).  And since the end points of the sub-tree are all of exact period $n$, the iterates of $f$ will act as cyclic permutations, with $f^n(z)$ as the identity.

2.  By Theorem \ref{preper}, if 0 is strictly pre-periodic with orbit type $(m,n)$, it will be strictly pre-periodic with orbit type $(m, n_1)$ (mod $p$), and either $n_1 = n$ or $rn_1=n$.  If $n_1=n$, then the critical orbit tree (mod $p$) is exactly the critical orbit tree in $\mathbb{Z}_p$, where each of the $m$ elements in the tail corresponds to a distinct residue class (mod $p$), and each of the $n$ elements in the cycle corresponds to a different distinct residue class (mod $p$).  If $rn_1=n$, then there are $r$ members of the cycle in each of the $n_1$ residue classes (mod $p$), so that each of the $n_1$ vertices (mod $p$) branches once, into $r$ distinct disks.  All $rn_1=n$ disks are in the same level in the tree.  In this case, the critical orbit tree (mod $p$) coincides with the tree in $\mathbb{Z}_p$ except for one instance of branching at the ${k+1}^{st}$ level, as in Theorem \ref{preper}.  The finite tree will have $m+n_1$ original branches from the top of the tree, so that the Gauss point has degree $m+n_1$, and if $n_1\neq n$ then each of the $n_1$ vertices in the periodic cycle will split into $r$ branches.  For example, compare the $(2,1)$ and $(2,3)$ trees in Figure \ref{3trees}.
\end{proof}

For a more explicit illustration of Theorem \ref{tree}, please see the examples of finite critical orbit trees in the next section.

%***************************
\section{The $\mathbb{Z}_p$ PCF Points and Critical Orbit Trees for $p=3,5,7$}\label{small}
%*************************

Given Theorem \ref{preper} and Proposition \ref{per}, it is relatively straightforward to calculate all possible PCF orbit types for a given prime, $p$.  We begin by calculating the $p$ distinct critical orbits for $f_c$ (mod $p$), with parameters in the $p$ residue classes (mod $p$).  Only the strictly pre-periodic (mod $p$) critical orbits need further work; the periodic critical orbits (mod $p$) indicate the existence of a hyperbolic component in the parameter space, which contains only the single isolated PCF parameter in the open disk of radius 1 in $\mathbb{Z}_p$.  (See the discussion in \S 1 of  \cite{Jones} regarding the size of hyperbolic components.)  The pre-periodic orbits are restricted in cycle length (see Theorem \ref{pezda2} and Corollary \ref{mult}), so once the cycle length has increased beyond the maximum allowed in $\mathbb{Z}_p$, we can be sure that the critical orbit is infinite for that class of parameters.

\begin{example}\label{p3ex}

Let $p=3$.  There are a total of 4 PCF parameters in $\mathbb{Z}_3$.
\begin{enumerate}
\item There are 2 hyperbolic components in $\mathbb{Z}_3$.  
\begin{enumerate}
\item For all $\mathbb{Z}_3$ parameters in the disk $D(0,1/3)$, the critical point is attracted to an attracting fixed point.  When $c=0$, 0 is that attracting fixed point, and for all other parameters 0 has infinite orbit. 
\[\mbox{The critical orbit for } f(z) = z^2 \mbox{ is } 0 \rightarrow 0 \mbox{ (mod } 3)\]
\item For all $\mathbb{Z}_3$ parameters in the disk $D(2,1/3)$, the critical point is attracted to an attracting 2-cycle. When $c=-1$, 0 is in that attracting cycle, and for all other parameters 0 has infinite orbit.
\[\mbox{The critical orbit for } f(z) = z^2 + 2  \mbox{ is } 0 \rightarrow 2 \rightarrow 0 \mbox{ (mod } 3)\]

\end{enumerate}

\item There exist 2 PCF points outside of the hyperbolic components, in the disk $D(1,1/3)$.  See Figure \ref{parameter3}.
\begin{enumerate}
\item When $c=-2$, 0 is pre-fixed with orbit type $(2,1)$.  All other parameters in $D(7,1/9)$ have infinite critical orbit, with orbit type $(2, 3^i)$ (mod $3^k$) for $k\geq 1$.  
\[\mbox{The critical orbit for } f(z) = z^2 - 2  \mbox{ is } 0 \rightarrow 1 \rightarrow 2 \rightarrow 2 \mbox{ (mod } 3)\]

\item There exists another PCF parameter $c \in D(1,1/9)$ with critical orbit type $(2,3)$.  All other parameters in the disk have infinite critical orbit, with orbit type $(2, 3^i)$ (mod $3^k$) for $k\geq 2$.  
\[\mbox{The critical orbit for } f(z) = z^2 + c  \mbox{ is } 0 \rightarrow 1 \rightarrow 2 \rightarrow 2 \mbox{ (mod } 3)\]
\[\mbox{The critical orbit resolves (mod } 3^2), \mbox{ with orbit type } (2,3):\]
\[ 0 \rightarrow 1 \rightarrow 2 \rightarrow 5 \rightarrow 8 \rightarrow 2 \]
\end{enumerate}
\end{enumerate}

Note that there are no PCF parameters in $D(4,1/9)$.  A more thorough discussion of the behavior around the PCF point $c=-2$ in $D(1,1/3)$ follows in the next section.

\end{example}

\begin{figure}[h]
\centerline{ \includegraphics[width=5in]{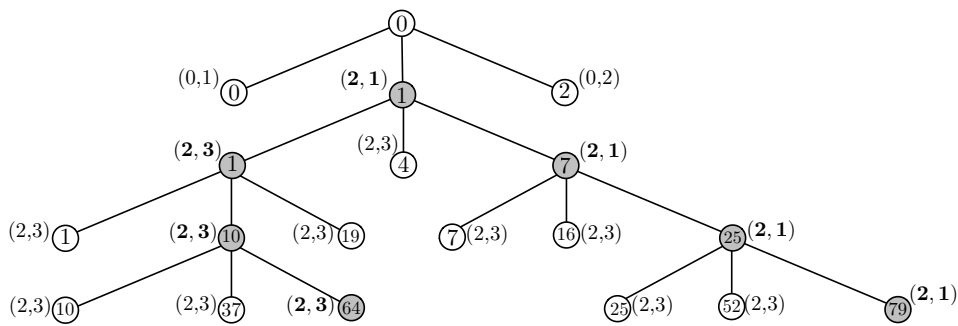}}
 \caption{$\mathbb{Z}_3$ as a parameter space, with the critical orbit type corresponding to each parameter, calculated at that level.  The shaded branch on the left follows the PCF parameter with critical orbit type $(2,3)$ and the shaded branch on the right follows the PCF parameter $c=-2$.  All other branches in $D(1,1/3)$ correspond to parameters with infinite critical orbit, with critical orbit type $(2, 3^i)$ at every level.\protect\label{parameter3}}  
\end{figure}

After the above calculations are completed for each PCF parameter, we translate the data to the corresponding critical orbit tree with the help of Theorem \ref{tree}.  If the critical point is of exact period $n$, the critical orbit tree will have a single vertex of degree $n$.  If the critical point is strictly periodic with orbit type $(m,n)$ (mod $p$), we start with the (mod $p$) tree, which will have one vertex of degree $m+n$.  This may be the whole tree if the (mod $p$) orbit type is the same as the critical orbit type in $\mathbb{Z}_p$, such as in the $(2,1)$ example above.  If not, the tree will have one instance of branching, at the level above which the $\mathbb{Z}_p$ critical orbit type is resolved.  In the $(2,3)$ example the critical orbit is $(2,3)$ (mod $3^2$), so the (mod 3) tree will branch at the first level.  The single vertex in the (mod 3) cycle will have 3 branches below it.  

See Figure \ref{3trees} for a complete list of the finite critical orbit trees in $\mathbb{Z}_3$.

\begin{example}
Let $p=5$.  There are a total of 7 PCF parameters in $\mathbb{Z}_5$.
\begin{enumerate}
\item There are 3 hyperbolic components in $\mathbb{Z}_5$.  
\begin{enumerate}
\item For all $\mathbb{Z}_5$ parameters in the disk $D(0,1/5)$, the critical point is attracted to an attracting fixed point.  When $c=0$, 0 is that attracting fixed point, and for all other parameters 0 has infinite orbit. 
\[\mbox{The critical orbit for } f(z) = z^2 \mbox{ is } 0 \rightarrow 0 \mbox{ (mod } 5)\]

\item For all $\mathbb{Z}_5$ parameters in the disk $D(4,1/5)$, the critical point is attracted to an attracting 2-cycle. When $c=-1$, 0 is in that attracting cycle, and for all other parameters 0 has infinite orbit.
\[\mbox{The critical orbit for } f(z) = z^2+4 \mbox{ is } 0 \rightarrow 4 \rightarrow 0  \mbox{ (mod } 5)\]

\item  There exists a parameter $c\in D(1,1/5)$ such that 0 is in an attracting 3-cycle.  For all other parameters in the disk, the critical point is attracted to an attracting 3-cycle.
\[\mbox{The critical orbit for } f(z) = z^2+1 \mbox{ is } 0 \rightarrow 1 \rightarrow 2 \rightarrow 0  \mbox{ (mod } 5)\]

\end{enumerate}

\item There exist 4 PCF points outside of the hyperbolic components, in the disks $D(2,1/5)$ and $D(3,1/5)$.
\begin{enumerate}
\item There exist 2 PCF parameters in $D(2,1/5)$, one $c$ with critical orbit type $(2,2)$ and one $c'$ with critical orbit type $(2,8)$.  All other parameters in the disk have infinite critical orbit, with orbit type $(2, 8\cdot5^i)$ (mod $5^k$) for $k\geq 2$.  
\begin{equation*}
\begin{gathered}
\mbox{The critical orbit for } f(z) = z^2+ c \mbox{ and } f(z) = z^2 + c' \mbox{ is } 0 \rightarrow 2 \rightarrow 3 \rightarrow 1 \rightarrow 3  \mbox{ (mod } 5)\\
\mbox{The critical orbit for }  f(z) = z^2 + c'  \mbox{ resolves (mod } 5^3), \mbox{ with orbit type } (2,8):\\
 0 \rightarrow 107 \rightarrow 56 \rightarrow 118 \rightarrow 31 \rightarrow 68 \rightarrow 106 \rightarrow 93 \rightarrow 6 \rightarrow 18 \rightarrow 56 
\end{gathered}
\end{equation*}
 
\item There exist 2 PCF parameters in $D(3,1/5)$, $c=-2$ with critical orbit type $(2,1)$, and another parameter $c'$  with critical orbit type $(2,2)$.  All other parameters in the disk have infinite critical orbit, with orbit type $(2, 2\cdot 5^i)$ (mod $5^k$) for $k\geq 2$.  
\begin{equation*}
\begin{gathered}
\mbox{The critical orbit for } f(z) = z^2-2 \mbox{ and } f(z) = z^2 + c' \mbox{ is } 0 \rightarrow 3 \rightarrow 2 \rightarrow 2  \mbox{ (mod } 5) \\
\mbox{The critical orbit for }  f(z) = z^2 + c'  \mbox{ resolves (mod } 5^2), \mbox{ with orbit type } (2,2):\\
 0 \rightarrow 18 \rightarrow 17 \rightarrow 7 \rightarrow 17 
\end{gathered}
\end{equation*}
\end{enumerate}
\end{enumerate}

See Figure \ref{5trees} for a complete list of the finite critical orbit trees in $\mathbb{Z}_5$.

\end{example}

\begin{figure}[h]
\centerline{ \includegraphics[width=3.5in]{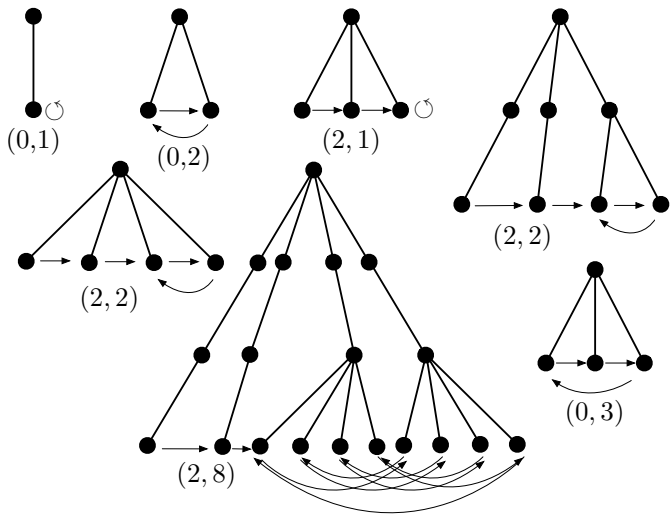}}
 \caption{The 7 distinct finite critical orbit trees in $\mathbb{Z}_5$, with corresponding critical orbit type labeled below.  Note that since the $(2,8)$ PCF orbit does not resolve until it is calculated (mod $5^3$), the branching occurs at the second level of the tree.\protect\label{5trees}}
\end{figure}

\begin{example}
Let $p=7$.  There are a total of 10 PCF parameters in $\mathbb{Z}_7$.
\begin{enumerate}
\item There are 3 hyperbolic components in $\mathbb{Z}_7$.  
\begin{enumerate}
\item For all $\mathbb{Z}_7$ parameters in the disk $D(0,1/7)$, the critical point is attracted to an attracting fixed point.  When $c=0$, 0 is that attracting fixed point, and for all other parameters 0 has infinite orbit. 
\[\mbox{The critical orbit for } f(z) = z^2 \mbox{ is } 0 \rightarrow 0  \mbox{ (mod } 7)\]

\item For all $\mathbb{Z}_7$ parameters in the disk $D(6,1/7)$, the critical point is attracted to an attracting 2-cycle. When $c=-1$, 0 is in that attracting cycle, and for all other parameters 0 has infinite orbit.
\[\mbox{The critical orbit for } f(z) = z^2 + 6 \mbox{ is } 0 \rightarrow 6 \rightarrow 0  \mbox{ (mod } 7)\]

\item  There exists a parameter $c\in D(3,1/7)$ such that 0 is in an attracting 3-cycle.  For all other parameters in the disk, the critical point is attracted to an attracting 3-cycle, and has infinite orbit.
\[\mbox{The critical orbit for } f(z) = z^2+ 3 \mbox{ is } 0 \rightarrow 3 \rightarrow 5 \rightarrow 0  \mbox{ (mod } 7)\]

\end{enumerate}

\item There exist 7 PCF points outside of the hyperbolic components, in the disks $D(1,1/7)$, $D(2,1/7)$, $D(4,1/7)$ and $D(5,1/7)$.
\begin{enumerate}
\item There exist 2 PCF parameters in $D(1,1/7)$, one $c$ with critical orbit type $(3,1)$ and one $c'$ with critical orbit type $(3,6)$.  All other parameters in the disk have infinite critical orbit, with orbit type $(3, 6\cdot7^i)$ (mod $7^k$) for $k\geq 2$.  
\begin{equation*}
\begin{gathered}
\mbox{The critical orbit for } f(z) = z^2+c  \mbox{ and } f(z) = z^2 + c' \mbox{ is } 0 \rightarrow 1 \rightarrow 2 \rightarrow 5  \rightarrow 5 \mbox{ (mod } 7) \\
\mbox{The critical orbit for }  f(z) = z^2 + c'  \mbox{ resolves (mod } 7^2), \mbox{ with orbit type } (3,6):\\
 0 \rightarrow 8 \rightarrow 23 \rightarrow 47 \rightarrow 12 \rightarrow 5 \rightarrow 33 \rightarrow 19 \rightarrow 26 \rightarrow 47
\end{gathered}
\end{equation*}

\item There exists 1 PCF parameter $c$ in $D(2,1/7)$, with critical orbit type $(4,1)$.  All other parameters in the disk have infinite critical orbit, with orbit type $(4, 7^i)$ (mod $7^k$) for $k\geq 1$.  
\[\mbox{The critical orbit for } f(z) = z^2+c  \mbox{ is } 0 \rightarrow 2 \rightarrow 6 \rightarrow 3  \rightarrow 4 \rightarrow 4 \mbox{ (mod } 7) \]

\item There exist 2 PCF parameters in $D(4,1/7)$, one $c$ with critical orbit type $(3,2)$ and one $c'$ with critical orbit type $(3,4)$.  All other parameters in the disk have infinite critical orbit, with orbit type $(3, 4\cdot7^i)$ (mod $7^k$) for $k\geq 2$.  
\begin{equation*}
\begin{gathered}
\mbox{The critical orbit for } f(z) = z^2+c  \mbox{ and } f(z) = z^2 + c' \mbox{ is } 0 \rightarrow 4 \rightarrow 5 \rightarrow 1  \rightarrow 5 \mbox{ (mod } 7) \\
\mbox{The critical orbit for }  f(z) = z^2 + c'  \mbox{ resolves (mod } 7^2), \mbox{ with orbit type } (3,4):\\
 0 \rightarrow 11 \rightarrow 34 \rightarrow 40 \rightarrow 43 \rightarrow 47 \rightarrow 15 \rightarrow 40
\end{gathered}
\end{equation*}

\item There exist 2 PCF parameters in $D(5,1/7)$, $c=-2$ with critical orbit type $(2,1)$, and another parameter $c'$ with critical orbit type $(2,3)$.  All other parameters in the disk have infinite critical orbit, with orbit type $(2, 3\cdot 7^i)$ (mod $7^k$) for $k\geq 2$.  
\begin{equation*}
\begin{gathered}
\mbox{The critical orbit for } f(z) = z^2-2  \mbox{ and } f(z) = z^2 + c' \mbox{ is } 0 \rightarrow 5 \rightarrow 2 \rightarrow 2  \mbox{ (mod } 7) \\
\mbox{The critical orbit for }  f(z) = z^2 + c'  \mbox{ resolves (mod } 7^2), \mbox{ with orbit type } (2,3):\\
 0 \rightarrow 26 \rightarrow 16 \rightarrow 37 \rightarrow 23 \rightarrow 16 
\end{gathered}
\end{equation*}

\end{enumerate}
\end{enumerate}

See Figure \ref{7trees} for a complete list of the finite critical orbit trees in $\mathbb{Z}_7$.

\end{example}

\begin{figure}[h]
\centerline{ \includegraphics[width=4in]{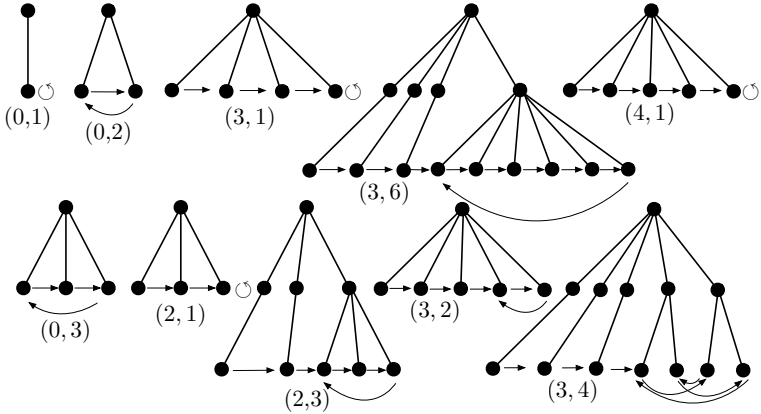}}
 \caption{The 10 distinct finite critical orbit trees in $\mathbb{Z}_7$, with corresponding critical orbit type labeled below. \protect\label{7trees}}
\end{figure}

We may replicate these calculations for any prime $p\geq 3$ in order to deduce exactly which finite critical orbit types arise for parameters in $\mathbb{Z}_p$.  There is a finite number of finite critical orbit types that are permissible for a given prime $p$, and in fact there is a finite number of PCF parameters in $\mathbb{Z}_p$. 

\begin{proposition}\label{number}
Fix $p\geq 3$ and let $Q_p$ be the number of quadratic PCF parameters in $\mathbb{Z}_p$.  Then  
\[
\displaystyle Q_p <  \sum_{\substack{m+n\leq p,\\ r|(p-1)}} 2^{m+rn},
\]
where the sum is taken over all triples $(m,r,n)$ such that $m+n\leq p$ and $r|(p-1)$, or $r=3$ if $p=3$.
\end{proposition}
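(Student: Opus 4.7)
The plan is to enumerate PCF parameters in $\mathbb{Z}_p$ by their critical orbit type, then bound the contribution of each type using the degree of an explicit universal polynomial in $c$.

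First I would classify each PCF parameter $c\in\mathbb{Z}_p$ by a triple $(m,r,n)$: $(m,n)$ is its critical orbit type modulo $p$ (which forces $m+n\leq p$ by the combinatorial bound on distinct residue classes in $\mathbb{F}_p$) and $rn$ is its $\mathbb{Z}_p$-cycle length, with $r\mid(p-1)$, or $r=3$ when $p=3$. By Theorem \ref{preper} and Proposition \ref{per} (together with Proposition \ref{fixed tail} and Corollary \ref{mult}), this triple is uniquely determined by $c$, so
\[
Q_p \;=\; \sum_{(m,r,n)} N_{m,r,n}, \qquad N_{m,r,n} := \#\{c\in\mathbb{Z}_p : f_c \text{ is PCF with critical orbit type exactly } (m,rn)\},
\]
the sum ranging over the admissible triples in the proposition's index set.

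Second I would bound each $N_{m,r,n}$ by a degree count. Any such $c$ satisfies $f_c^{m+rn}(0)=f_c^m(0)$, so $c$ is a root of the polynomial
\[
P_{m,rn}(c) \;:=\; f_c^{m+rn}(0) - f_c^m(0) \;\in\; \mathbb{Z}[c].
\]
A one-line induction on $N\geq 1$ using $f_c^{N+1}(0)=(f_c^N(0))^2+c$ shows $\deg_c f_c^N(0)=2^{N-1}$ with leading coefficient $1$; since $m+rn>m$, no cancellation occurs in the leading term, so $\deg P_{m,rn}=2^{m+rn-1}$. Hence $N_{m,r,n}\leq 2^{m+rn-1}$.

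Summing over all admissible triples then gives
\[
Q_p \;\leq\; \sum_{(m,r,n)} 2^{m+rn-1} \;=\; \tfrac{1}{2}\sum_{(m,r,n)} 2^{m+rn} \;<\; \sum_{(m,r,n)} 2^{m+rn},
\]
the strict inequality holding because the index set is nonempty (for example, the triple $(0,1,1)$ is always included, corresponding to $c=0$). The only step requiring real care is the enumeration: I would need to verify from Theorem \ref{preper} and Proposition \ref{per} that the map from PCF parameters to triples is well-defined and that the proposition's indexing set truly contains every image triple. Once that bookkeeping is done, the degree estimate and the final inequality are routine, so this is less a genuine obstacle than a check that the hypotheses of the earlier results are exhaustive.
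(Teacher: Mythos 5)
Your proposal is correct and follows essentially the same route as the paper: classify each PCF parameter by a triple $(m,r,n)$ via Proposition \ref{per}, Theorem \ref{preper} and Corollary \ref{mult}, then bound the count for each orbit type by the number of roots in $c$ of $f_c^{m+rn}(0)-f_c^m(0)$ and sum over admissible triples. Your only departure is a slight sharpening -- you compute the exact degree $2^{m+rn-1}$ (the paper loosely cites $2^{m+rn}$), which also gives the strict inequality cleanly; the minor point that distinct triples could repeat the same orbit type $(m,rn)$ only over-counts and so does not affect the bound.
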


\begin{proof}

For any parameter $c \in \mathbb{Z}_p$, $p\geq 3$, if the critical point has finite orbit for the corresponding map $f_c$, then the total length of that orbit is of the form $m+rn$, where $(m,n)$ is the (mod $p$) critical orbit type.  Since there are only $p$ residue classes (mod $p$), and each of the $m+n$ members of the (mod $p$) orbit must be distinct (mod $p$), $m+n\leq p$.  By Corollary \ref{mult}, the cycle length $n$ may multiply at most once by a number $r$ that divides $p-1$, or $r=3$ if $p=3$, so the cycle length in $\mathbb{Z}_p$ is $rn$, where $r$ could be 1.

Any parameter $c$ that corresponds to finite critical orbit type $(m, rn)$ will satisfy the equation $f^{m+rn}(0)= f^m(0)$.  Note that $f^{m+rn}(0)- f^m(0)$ is a degree $2^{m+rn}$ polynomial in $c$, and so there are a maximum of $2^{m+rn}$ possible distinct PCF parameters in $\mathbb{Z}_p$ that correspond to each given finite critical orbit type $(m, rn)$.  If we take the sum of $2^{m+rn}$ over all possible triples $(m,r,n)$ given the above restrictions on $m, r$, and $n$, then we have a very crude upper bound on the total possible number of PCF parameters that live in $\mathbb{Z}_p$, for given prime $p$.
\end{proof}

\begin{remark}
The number of non-zero hyperbolic components in $\mathbb{Z}_p$ is bounded above by $(p-1)/2$, as the PCF parameter must be a quadratic residue mod $p$.  Recall from Theorem \ref{per} that a hyperbolic component occurs only when the critical orbit is periodic (mod $p$).  This corresponds to the existence of a solution to  $f_c^n(0) = 0$ in $\mathbb{Z}_p$.  The point $z_0=f_c^{n-1}(0)$ satisfies $f_c(z_0) = {z_0}^2+c = 0$, so $z^2+c = 0$ must have a solution (mod $p$).  Thus $-c$ is a perfect square (mod $p$), so $-c$ is a quadratic residue (mod $p$), and there are exactly $(p-1)/2$ such values.  (See the discussion in \cite{Thiran}.)
\end{remark}

So we see that this bound on $Q_p$ may be refined quite a bit, if we even consider the periodic and strictly pre-periodic PCF parameters separately.  Proposition \ref{number} suffices to show finiteness, however, and so we will leave it at that.

Also note that in practice, there are not always $(p-1)/2$ hyperbolic components, as in the case of $p=7$.  In fact, as Rafe Jones conjectured in email correspondence, it seems that as $p$ increases to infinity, the proportion of hyperbolic components tends to 0. 

%**************************
\section{The Dynamics of Parameters in $\mathbb{Z}_3$ near $c=-2$}\label{p3}
%*************************

From Example \ref{p3ex}, we know that there are a total of 4 PCF parameters in $\mathbb{Z}_3$.  In this section we more closely examine the dynamics of maps corresponding to parameters in the disk $D(-2,1/9)$ around the PCF point $c=-2$.  We give the exact critical orbit type for each parameter, at every level of the $\mathbb{Z}_3$ tree, given how close the parameter is to $c=-2$.

\subsection*{Theorem \ref{c-2}.} \textit{Let }$p=3$\textit{ and consider the map }$f_c(z)=z^2+c$\textit{ for }$c\in\mathbb{Z}_3$\textit{.  In} $D(-2, 1/9)$\textit{, if a parameter} $c$\textit{ is such that }
\[c\equiv -2 \mbox{ (mod } 3^k) \qquad \mbox{ but } \qquad c\not\equiv -2 \mbox{ (mod }3^{k+1}),\]
\textit{then 0 has orbit type }$(2,3^i)$\textit{ (mod }$3^{k+i}$\textit{) for }$i\geq 1$\textit{, for the corresponding map }$f_c(z) = z^2 + c$.

\begin{remark}
We expect a similar statement for parameters in $D(1,1/9)$ that are equivalent to the PCF parameter corresponding to critical orbit type $(2,3)$ up to a certain level.  This will be included in future work.
\end{remark}

The remainder of this section is devoted to proving Theorem \ref{c-2}.  The proof is calculation-heavy at times, so for ease of reading some of the computations have been omitted.  The reader is encouraged to verify any statements that lack full justification.
 
\begin{proof}[Proof of Theorem \ref{c-2}]
Consider the PCF parameter $c=-2$.  All other parameters in $D(-2,1/9)$ have infinite critical orbit, with orbit type of the form $(2,3^i)$ at every level in the $\mathbb{Z}_3$ tree.  The proof is based on the local structure of the iterates of $f_c$ for parameters near $c=-2$.  To begin, we show that a $\mathbb{Z}_3$ parameter 1 combinatorial distance away from the $-2$-branch in $\mathcal{M}_3$ corresponds to a map with critical orbit type $(2,3)$.

\begin{lemma}
Suppose $c\equiv -2$ (mod $3^k$) but $c\not\equiv -2$ (mod $3^{k+1})$ for $k\geq 2$, $c\in\mathbb{Z}_3$.  Then 0 has orbit type $(2,3)$ (mod $3^{k+1})$.
\end{lemma}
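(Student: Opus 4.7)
The plan is to parametrize $c = -2 + a\cdot 3^k$ with $a \in \mathbb{Z}_3^{\times}$, which encodes the hypothesis precisely, and then expand the iterates $f_c^j(0)$ about the fixed point $z = 2$ of the center map $f_{-2}$. The goal is to identify $f_c^2(0)$, $f_c^3(0)$, $f_c^4(0)$ as three distinct residues forming a 3-cycle modulo $3^{k+1}$, while showing that $0$ and $f_c(0)$ land outside this cycle so that the tail has length exactly $2$.

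First I would check directly that $f_c^2(0) = c^2 + c = 2 - a\cdot 3^{k+1} + a^2\cdot 3^{2k}$, which for $k \geq 2$ yields $f_c^2(0) \equiv 2 \pmod{3^{k+1}}$. Then, writing $f_c^j(0) = 2 + \epsilon_j$ for $j \geq 2$, the defining relation $f_c(z) = z^2 + c$ produces the recursion
\[
\epsilon_{j+1} = 4\epsilon_j + \epsilon_j^2 + a\cdot 3^k.
\]
An induction on $j$ shows $v_3(\epsilon_j) \geq k$; granted this, $\epsilon_j^2 \in 3^{2k}\mathbb{Z}_3 \subseteq 3^{k+1}\mathbb{Z}_3$ and $4\epsilon_j \equiv \epsilon_j \pmod{3^{k+1}}$, since $4 = 1 + 3$. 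The recursion therefore collapses modulo $3^{k+1}$ to the additive relation $\epsilon_{j+1} \equiv \epsilon_j + a\cdot 3^k$. Iterating from $\epsilon_2 \equiv 0$, one obtains $\epsilon_3 \equiv a\cdot 3^k$, $\epsilon_4 \equiv 2a\cdot 3^k$, and $\epsilon_5 \equiv 3a\cdot 3^k \equiv 0 \equiv \epsilon_2 \pmod{3^{k+1}}$, so $f_c^5(0) \equiv f_c^2(0) \pmod{3^{k+1}}$.

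To conclude that the orbit type is exactly $(2,3)$ I would verify minimality on both sides: the residues $\epsilon_2, \epsilon_3, \epsilon_4$ are pairwise distinct modulo $3^{k+1}$ because $a$ is a 3-adic unit, so the cycle length is exactly $3$; and the tail has length exactly $2$ because every cycle element reduces to $2 \pmod 3$, whereas $0 \equiv 0$ and $f_c(0) = c \equiv 1 \pmod 3$. The only real obstacle is careful bookkeeping of error orders in the recursion, specifically justifying the simplification $4\epsilon_j \equiv \epsilon_j \pmod{3^{k+1}}$ via the inductive bound $v_3(\epsilon_j) \geq k$. This simplification is the arithmetic shadow of the fact that the multiplier of $f_{-2}$ at the fixed point $2$ equals $4 \equiv 1 \pmod 3$, so that small perturbations in $c$ break the parabolic fixed point into a period-$3$ orbit, matching the $r = p = 3$ case of Theorem \ref{MS}.
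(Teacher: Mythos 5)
Your proof is correct and is essentially the paper's argument in different coordinates: the collapse of your recursion to $\epsilon_{j+1}\equiv\epsilon_j+a\cdot 3^k \pmod{3^{k+1}}$ is exactly the paper's computation that the rescaled local map at the $k^{th}$ level centered at $f_c^2(0)$ is a nontrivial translation $z\mapsto z+b \pmod 3$. The only difference is presentational: you verify the exact period $3$ and the exact tail length $2$ by direct residue checks, whereas the paper draws those conclusions from the local translation structure together with its general machinery (Proposition \ref{fixed tail}).
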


\begin{proof}
For such a $c\in\mathbb{Z}_3$ in $D(-2,1/9)$, the corresponding critical orbit is not fixed as orbit type $(2,1)$ (mod $3^{k+1}$).  We show that the first iterate of $f_c$ is locally a translation at the $k^{th}$ level.  This implies that the length of the critical orbit cycle will multiply by 3 at the ${k+1}^{st}$ level, resulting in critical orbit type $(2,3)$ (mod $3^{k+1})$.

Consider the local map of $f_c(z)=z^2+c$ for  $c\equiv -2$ (mod $3^k$), $c\not\equiv -2$ (mod $3^{k+1})$, centered at  $f_c^2(0)=c^2+c$, which is fixed (mod $3^k$).  Note that $c= -2+  l\cdot 3^k+\displaystyle\sum^{\infty}_{j=1} a_j 3^{k+j}$ for $l\in\{1,2\}$, $a_j \in\{ 0,1,2\}$.  

\[
\begin{array}{rcl}
\frac{1}{3^k}(f_c(c^2+c+3^kz) - (c^2+c)) & = & \frac{1}{3^k}((c^2+c+3^kz)^2+c)-(c^2+c))\\
 & = & \frac{1}{3^k}((c^2+c)^2+2(c^2+c)3^k z + 3^{2k}z^2+c-(c^2+c))\\
 & = & \frac{1}{3^k}(c^4+2c^3+c^2+2(c^2+c)3^k z + 3^{2k}z^2-c^2)\\
 & = & \frac{1}{3^k}(c^4+2c^3+2(c^2+c)3^k z + 3^{2k}z^2)\\
 & = & \frac{1}{3^k}(c^4+2c^3)  + 2(c^2+c)z + 3^kz^2\\
 & = & \frac{1}{3^k}(-8l 3^k + O(3^{k+1}))  + (4+O(3^{k+1}))z + 3^kz^2\\ 
& = & -8l +O(3)  + (4+O(3^{k+1}))z + 3^kz^2\\ 
& \equiv & z+ b \qquad \mbox{(mod }3),

\end{array}
\]
where $b\equiv -8l $ (mod $3$) $\not\equiv 0$ (mod $3$) since $l\in\{1,2\}$.

\end{proof}

At this point we call upon linearization to complete the picture.  For parameters 1 combinatorial distance away from the $-2$-branch at the ${k+1}^{st}$ level, $f_c$ has critical orbit type $(2,3)$ (mod $3^{k+1})$, for any $k\geq 2$.  Such $f_c$ have a fixed point $x=(1+\sqrt{1-4c})/2 \equiv 2$ (mod $3^k$).  We show that a) there exists a disk of linearization around $x$ in which the iterates of $f_c$ behave as the linear map $\lambda z$, for $\lambda = (f_c)'(x)$; and b) $f_c^2(0)$ is close enough to $x$ that it is in the linearization disk containing $x$.  We also show that $\lambda z$ is locally a translation, so the cycle length of vertices in the disk other than $x$ will increase by a factor of 3 at each level, under iteration of $f_c$. 

In terms of existence, any such $f_c$ is locally linearizable around its fixed points by the Non-Archimedean Siegel Theorem of Herman and Yoccoz \cite{Herman}, since $|\lambda|=1$ and $\lambda$ is not a root of unity.  It remains to show approximately how big the linearization disks are, using the calculations outlined in \cite{Olof}.  We include relevant definitions in the proof of Lemma \ref{radius} below.

\begin{lemma}\label{radius}
The map $f_{-2}(z)=z^2-2$ has a disk of linearization of radius at least $3^{-3/2}$ around the fixed point $z=2$.  
\end{lemma}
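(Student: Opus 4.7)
The plan is to conjugate $f_{-2}$ near the fixed point $z=2$ to its linear part and show that the conjugating power series converges on a disk of radius at least $3^{-3/2}$. First I would translate the fixed point to the origin by setting $w=z-2$, so the map becomes
\[
g(w) \;=\; (w+2)^2 - 2 - 2 \;=\; \lambda w + w^2, \qquad \lambda \;=\; 4 \;=\; f_{-2}'(2).
\]
Since $|\lambda|_3=1$ and $\lambda = 1+3$ is not a root of unity in $\mathbb{C}_3$, the Non-Archimedean Siegel Theorem of Herman and Yoccoz cited in \cite{Herman} guarantees a unique formal series $\psi(w) = w + \sum_{n\geq 2} a_n w^n$ with $\psi \circ g = L \circ \psi$, where $L(w)=\lambda w$. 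The radius of linearization is precisely the radius of convergence of $\psi$, so the task reduces to bounding $|a_n|_3$.

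Next I would derive the recursion for the $a_n$ by expanding $\psi(g(w))=\lambda \psi(w)$ and equating coefficients of $w^n$. Because $g$ has only a quadratic nonlinear term, the recursion simplifies to
\[
a_n(\lambda - \lambda^n) \;=\; \sum_{k=\lceil n/2\rceil}^{n-1} a_k \binom{k}{n-k}\lambda^{2k-n} \qquad (n \geq 3),
\]
with $a_2 = 1/(\lambda - \lambda^2)$. The ultrametric inequality, together with $|\binom{k}{n-k}|_3 \leq 1$ and $|\lambda|_3=1$, then gives $|a_n|_3 \leq |\lambda^n - \lambda|_3^{-1} \cdot \max_{k<n} |a_k|_3$.

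Third, I would control the ``small divisors" $\lambda^n - \lambda = \lambda(\lambda^{n-1}-1)$ using the $3$-adic Lifting-the-Exponent Lemma. Since $3\mid \lambda-1$ and $3\nmid \lambda$, LTE yields $v_3(\lambda^{n-1}-1) = v_3(\lambda-1) + v_3(n-1) = 1 + v_3(n-1)$, i.e.\ $|\lambda^n-\lambda|_3 = 3^{-1-v_3(n-1)}$. Setting $M_n = \max_{k\leq n}|a_k|_3$, the recursion becomes $M_n \leq 3^{\,1+v_3(n-1)} M_{n-1}$, and telescoping with Legendre's formula $v_3((n-1)!) = (n-1 - s_3(n-1))/2$ (where $s_3$ denotes the base-$3$ digit sum) produces
\[
|a_n|_3 \;\leq\; 3^{\,n-1}\cdot 3^{\,v_3((n-1)!)} \;=\; 3^{\,(3(n-1) - s_3(n-1))/2}.
\]
Hence $\limsup_{n\to\infty}|a_n|_3^{1/n} \leq 3^{3/2}$, so $\psi$ converges on the disk $|w|_3 \leq 3^{-3/2}$. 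On this disk the bound also forces $|a_n|_3 |w|_3^{n-1} < 1$ for every $n \geq 2$ (since $s_3(n-1) \geq 1$), so by the ultrametric $\psi$ acts as an isometry and is therefore a bijection onto the disk. Translating back to the $z$-coordinate gives a linearization disk of radius at least $3^{-3/2}$ about $z=2$.

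The main obstacle is arranging the estimates so that the sharp bound $3^{-3/2}$ — rather than the much weaker $3^{-1}$ one gets from the naive estimate $|\lambda^n-\lambda|_3 \leq 3^{-1}$ — actually emerges. The LTE identity, which exploits that $\lambda$ sits exactly one combinatorial level below the Gauss point of $\mathbb{C}_3$, is the key input; without the refinement $v_3(\lambda^{n-1}-1) = 1 + v_3(n-1)$ one cannot reach the correct radius. Everything else (the recursion, the Legendre telescoping, and the ultrametric bijectivity argument) is standard.
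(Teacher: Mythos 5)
Your proposal is correct in substance, but it proves the lemma by a genuinely different route than the paper. The paper's proof is a plug-in computation: it quotes Lindahl's explicit lower bound $\tilde r(\lambda)$ from \cite{Olof}, checks that $\lambda=4$, $p=3$ give $m=1$, $s=t=0$, $\gamma_0=1$, and reads off $\tilde r(\lambda)=3^{-1/2}\cdot|1-\lambda|=3^{-3/2}$. You instead rederive the estimate from scratch: conjugate to $g(w)=\lambda w+w^2$, set up the coefficient recursion for the formal linearizer $\psi$ (your recursion and the value $a_2=1/(\lambda-\lambda^2)$ are correct), control the small divisors by lifting-the-exponent, $v_3(\lambda^{n-1}-1)=1+v_3(n-1)$, and telescope with Legendre's formula to get $|a_n|_3\le 3^{(3(n-1)-s_3(n-1))/2}$. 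This buys a self-contained proof that exposes the mechanism behind Lindahl's bound, and your observation that $|a_n|_3\,r^{\,n-1}\le 3^{-s_3(n-1)/2}<1$ for $r=3^{-3/2}$ gives the isometry/bijectivity of $\psi$ directly, which is exactly the property the paper later exploits when transporting the translation dynamics of $\lambda z$ back to $f_c$; the cost is that you must supply the convergence and conjugacy details that \cite{Olof} packages for free. One small imprecision: your bound only forces $|a_nw^n|\to 0$ when $|w|<3^{-3/2}$ (for $|w|=3^{-3/2}$ the terms with $s_3(n-1)=1$ are merely bounded by $3^{-2}$), so you have established linearization on the open disk of radius $3^{-3/2}$ rather than the closed one; this still gives ``radius at least $3^{-3/2}$'' and suffices for the paper's application, which only needs points with $|f_c^2(0)-x|\le 3^{-2}<3^{-3/2}$, but the sentence claiming convergence on $|w|_3\le 3^{-3/2}$ should be weakened accordingly. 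Also note that existence and uniqueness of the formal series $\psi$ is elementary once $\lambda$ is not a root of unity; Herman--Yoccoz is only needed if you want convergence without your explicit estimates, so the citation there is harmless but not doing any work.
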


\begin{proof}

For $z=2$, the multiplier $\lambda_2 = 2\cdot 2 = 4$ satisfies $0<|1-\lambda|\leq 1/p$, with $ | 1-\lambda_2|= |3|=1/3$.  We may not use the exact radius of the linearization disk $r(P_{\lambda})$ given in \cite{Olof} Theorem 5.1 since $1/p\nless |1-\lambda|< 1$, so we take $\tilde r(\lambda)$ to be a sufficient lower bound.
\[
\tilde r(\lambda) := R(s+1)^{\frac{1}{m}} p^{-\frac{s-t}{mp^s}} |1-\lambda^m|^{s\frac{p-1}{mp}}\cdot |\gamma_0 - \lambda^m|^{\frac{1}{mp^{s-t}}},
\]
where $R(s) = p^{-\frac{1}{p^{s-1}(p-1)}}$, and $s\geq 0$ is the integer for which $R(s)\leq |1-\lambda^m|<R(s+1)$, with $m =$min$\{n\in\mathbb{Z} : n\geq 1, |1-\lambda^n|<1\}$.  Also, $\gamma_0$ is the ramified ${p^s}$-root of unity that minimizes $|\gamma_0 - \lambda^m|$, and $t\geq 0$ is the integer such that $R(t) \leq | \gamma_0 - \lambda^m|<R(t+1)$.

In our case of $\lambda_2 = 4$ and $p = 3$, we have the following:  $m=1$ and $s=0$, since $s$ must satisfy $3^{-\frac{1}{3^{s-1}(2)}}\leq |1-\lambda_2|< 3^{-\frac{1}{3^s(2)}}$.  Then $\gamma_0=1$, and so $t=0$. 
\[
\tilde r(\lambda_2)= (3^{-\frac{1}{2}})^1\cdot 3^0 \cdot |1-\lambda_2^1|^0 \cdot |1-\lambda_2^1|^{\frac{1}{3^0}} = 3^{-\frac{1}{2}}\cdot |1-\lambda_2| = 3^{-\frac{1}{2}}\cdot 3^{-1} = 3^{-\frac{3}{2}}.
\]

\end{proof}

\begin{corollary}
The map $f_c(z)=z^2+c$, with $c\in\mathbb{Z}_3$ such that $c\equiv -2$ (mod $3^k$) but $c\not\equiv -2$ (mod $3^{k+1})$, $k\geq 2$, has a disk of linearization of radius at least $3^{-3/2}$ around the fixed point $x=(1+\sqrt{1-4c})/2$.

\end{corollary}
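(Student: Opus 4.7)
The plan is to imitate the calculation in Lemma \ref{radius} verbatim, after verifying that the relevant $p$-adic quantities attached to the fixed point $x=(1+\sqrt{1-4c})/2$ are numerically indistinguishable from those attached to the fixed point $z=2$ of $f_{-2}$.

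First I would check that $x$ makes sense in $\mathbb{Z}_3$ and lies close to $2$. Since $c\equiv -2\pmod{3^k}$ with $k\geq 2$, we have $1-4c\equiv 9\pmod{3^k}$, so $1-4c=9u$ for some $u\in\mathbb{Z}_3^*$ with $u\equiv 1\pmod 3$. Hensel's lemma applied to $T^2-u$ produces $\sqrt{u}\in\mathbb{Z}_3$ with $\sqrt{u}\equiv 1\pmod 3$, and I choose the sign of $\sqrt{1-4c}=3\sqrt{u}$ so that $x\equiv 2\pmod 3$. In particular $x\equiv 2\pmod{3^{k-1}}$, and hence the multiplier satisfies $\lambda:=f_c'(x)=2x\equiv 4\pmod{3^{k-1}}$. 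Since $k\geq 2$, this already gives $|1-\lambda|=|{-3}+O(3^{k-1})|=1/3$, matching $|1-\lambda_2|$ exactly. Because $|\lambda|=1$ and $|1-\lambda|<1$, the multiplier is not a root of unity, and the Non-Archimedean Siegel Theorem of Herman--Yoccoz \cite{Herman} guarantees that $f_c$ is locally linearizable at $x$.

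Next I would apply the explicit lower bound $\tilde r(\lambda)$ from \cite{Olof} used in Lemma \ref{radius}. The integer parameters entering the formula depend only on $|1-\lambda^n|$ and on the location of the nearest $p^s$-th root of unity. Since $|1-\lambda|=1/3$, the minimal $m$ with $|1-\lambda^m|<1$ is $m=1$; the integer $s$ determined by $R(s)\leq|1-\lambda|<R(s+1)$ is $s=0$; the closest ramified root of unity is $\gamma_0=1$; and $|\gamma_0-\lambda|=|1-\lambda|=1/3$ forces $t=0$. Plugging these into $\tilde r(\lambda)$ gives exactly $3^{-1/2}\cdot|1-\lambda|=3^{-3/2}$, as in the lemma.

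The only delicate point is verifying that $m,s,t,\gamma_0$ are genuinely the same for every $c$ in the family, and this reduces to the single equality $|1-\lambda|=1/3$. That equality is an immediate consequence of the hypothesis $c\equiv -2\pmod{3^k}$ with $k\geq 2$, so there is no real obstacle; the corollary will follow from Lemma \ref{radius} by direct substitution of $\lambda$ in place of $\lambda_2$.
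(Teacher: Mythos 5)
Your overall strategy is exactly the paper's: re-run the computation of Lemma \ref{radius} after checking that the inputs $m,s,t,\gamma_0$ to Lindahl's bound $\tilde r(\lambda)$ are unchanged when $\lambda_2=4$ is replaced by $\lambda=2x$, and for $k\geq 3$ your verification is correct and in fact more explicit than the paper's one-line justification. The problem is the case $k=2$, which the statement includes. Writing $c=-2+l\cdot 3^2+O(3^3)$ with $l\in\{1,2\}$, one gets $1-4c=9u$ with $u\equiv 1-4l \pmod 3$, so the step ``$u\equiv 1\pmod 3$, hence Hensel gives $\sqrt{u}\in\mathbb{Z}_3$'' is only justified when $k\geq 3$ (then $u\equiv 1\pmod{3^{k-2}}$). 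For $l=2$ (i.e. $c\equiv 16\pmod{27}$) one has $u\equiv 2\pmod 3$, a non-residue, so $x$ lies in the unramified quadratic extension rather than $\mathbb{Z}_3$; this is harmless for the radius computation, since $|1-\lambda|=|1-4c|^{1/2}=1/3$ still holds, but note the paper deliberately refuses to assume $x\in\mathbb{Z}_3$ for exactly this reason. For $l=1$ (i.e. $c\equiv 7\pmod{27}$, e.g. $c=7$, where $1-4c=-27$) the failure is substantive: $v_3(1-4c)\geq 3$, so $|1-\lambda|=|1-4c|^{1/2}\leq 3^{-3/2}<1/3$, and your asserted equality $|1-\lambda|=1/3$ is false.

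This is not merely cosmetic, because the final plug-in uses $|1-\lambda|$ itself: with $m=1$, $s=t=0$, $\gamma_0=1$ the bound is $\tilde r(\lambda)=3^{-1/2}\,|1-\lambda|$, which for $c\equiv 7\pmod{27}$ gives only $3^{-1/2}\cdot 3^{-3/2}=3^{-2}$ (or less, if $v_3(1-4c)>3$, where even the determination of $s$ changes), not the claimed $3^{-3/2}$. So as written your argument proves the corollary only for $k\geq 3$ and for the branch $c\equiv 16\pmod{27}$ of $k=2$; the branch $c\equiv 7\pmod{27}$ needs a separate treatment (a different regime of Lindahl's estimates for small $|1-\lambda|$, or an independent argument, or a restriction of the statement). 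The paper's own proof, which just asserts that ``each piece in the definition of $\tilde r(\lambda_2)$ is the same as for $\tilde r(\lambda_x)$,'' is silent on this subcase as well, but your more explicit claims make the gap concrete, and you should flag and handle it rather than assert $|1-\lambda|=1/3$ for all $k\geq 2$. (A minor additional point: both choices of sign of $\sqrt{1-4c}$ give $x\equiv 2\pmod 3$, since the two fixed points differ by $\sqrt{1-4c}$, which has absolute value less than $1$; the sign is fixed by the formula $x=(1+\sqrt{1-4c})/2$ with $\sqrt{u}\equiv 1$, not by a congruence mod $3$.)
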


\begin{proof}

Using the calculations in the proof of Lemma \ref{radius} above, we see that for $c\in\mathbb{Z}_3$ sufficiently close to -2, the multiplier of the corresponding fixed point, $x$, is close enough to $\lambda_2$ so that each piece in the definition of $\tilde r(\lambda_2)$ is the same as for $\tilde r(\lambda_x)$.  Thus, the disk of linearization around $x$ has radius at least $3^{-\frac{3}{2}}$.
\end{proof}

Now it remains to show that $|f_c^2(0)-x|\leq 3^{-\frac{3}{2}}$.  To do this, it will not be taken for granted that $x$ is an element of $\mathbb{Z}_3$.  It is an algebraic number, however, so it must be in some finite extension of $\mathbb{Q}_3$.  We may write  $\displaystyle x= 2 + h\cdot \pi^d + \sum_{i=1}^{\infty} b_i \pi^ {d+i}$ where $h\neq0$, and $3^k\leq \pi^d<3^{k+1}$, with $|\pi| = 3^{-\frac{a}{b}}$. 

We first fix $k=2$ and show that $9|(f_c^2(0)-x)$, so $|f_c^2(0)-x|\leq 3^{-2}<3^{-\frac{3}{2}}$.  Note that in this case $\displaystyle c=-2+l\cdot 3^2 + \sum_{j=1}^\infty a_j3^{2+j}$, with $l=1,2$.

\[
\begin{array}{rcl}
|c^2+c-x| & = & | 4 - 8l3^2 + l^23^4 - 4\sum_{j=1}^\infty a_j3^{2+j}+2l3^2\sum_{j=1}^\infty a_j3^{2+j} + (\sum_{j=1}^\infty a_j3^{2+j})^2\\
&& + -2 + l3^2+ \sum_{j=1}^\infty a_j3^{2+j} - (2 + h\pi^d + \sum_{i=1}^{\infty} b_i \pi^ {d+i})|\\
&=& |-7l3^2 + l^23^4 + (-3 + 2l3^2) (\sum_{j=1}^\infty a_j3^{2+j}) + (\sum_{j=1}^\infty a_j3^{2+j})^2 - h\pi^d \\
&& - \sum_{i=1}^{\infty} b_i \pi^ {d+i}|\\
& = & |3^2| |-7l + l^23^2 + (-3 + 2l3^2) (\sum_{j=1}^\infty a_j3^{j}) +  (\sum_{j=1}^\infty a_j3^{j})(\sum_{j=1}^\infty a_j3^{2+j})\\
&& - h\frac{\pi^d}{3^2} - \sum_{i=1}^{\infty} b_i \frac{\pi^ {d+i}}{3^2}|\\
&=& 3^{-2} \cdot 1
\end{array}
\] 
Since $\pi = 3^{\frac{a}{b}}$, with $3^2\leq \pi^d < 3^3$, we may safely factor out $3^2$ and have a power of $\pi$ leftover:  $\frac{\pi^d}{3^2} = \pi^e$ where $1\leq \pi^e < 3$.  Further, since $l= 1$ or 2, $3\nmid7l$ so $|c^2+c-x|/|3^2|=1$.  

For $k>2$, we see that $|c^2+c-x|/|3^2|\leq1$, since the remainder may be divisible by some higher power of 3.  This confirms that for any $k\geq 2$,  $|c^2+c-x|\leq3^{-2}<3^{-\frac{3}{2}}$.  The second iterate of 0 is therefore close enough to the fixed point $x$ to be in the disk of linearization around $x$.

We now take a closer look at the dynamics of $\lambda z$ in the linearization disk.  The multiplier $\lambda =2x$, so we may write $\displaystyle \lambda= 4 + 2h\cdot \pi^d + 2\sum_{i=1}^{\infty} b_i \pi^ {d+i}$, where $h\neq0$ and $3^k\leq \pi^d<3^{k+1}$.  We show that the iterates of $\lambda z$ behave locally as nontrivial translations, away from the fixed point.  

\begin{lemma}\label{linear}
Suppose $\lambda \equiv 4 $ (mod $3^k$) but $\lambda \not\equiv 4 $ (mod $3^{k+1}$), so $\lambda = 4 + 2h\cdot \pi^d + 2\sum_{i=1}^{\infty} b_i\pi^{d+i}$ for $h\in\{1,2\}$, $b_i\in\{0,1,2\}$, $3^k\leq \pi^d < 3^{k+1}$.  Then combinatorial distance $n$ away from the 0-branch, the map $g(z) = \lambda z$ behaves locally as a nontrivial translation, $z+1$ or $z+2$.
\end{lemma}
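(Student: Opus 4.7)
The plan is to study the first-return dynamics of $g(z) = \lambda z$ on a vertex at combinatorial distance $n$ from the $0$-branch. Such a vertex has the form $v = D(a, r)$ with $a \neq 0$ and $r = |a|\cdot 3^{-n}$; since $|\lambda| = 1$, the map $g$ preserves radii and sends $v$ to $D(\lambda a, r)$. The $g$-orbit of $v$ is therefore a cycle, whose length $N$ is the smallest positive integer with $|\lambda^N - 1| \leq 3^{-n}$.

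First I would pin down $N$ using the hypothesis on $\lambda$. From $\lambda - 1 = 3 + 2h\pi^d + \ldots$, in the generic case $|\lambda - 1| = 1/3$; and a standard binomial expansion shows $|\lambda^{3^i} - 1| = 3^{-(i+1)}$ for every $i \geq 0$. One concludes that $N = 3^{n-1}$, and that the equality $|\lambda^N - 1| = 3^{-n}$ is sharp, forced by the minimality of $N$. Passing to local coordinates $w = (z-a)/r$ on $v$, a direct substitution gives
\[
w' \;=\; \frac{g^N(z) - a}{r} \;=\; \frac{(\lambda^N - 1)\,a}{r} \;+\; \lambda^N\, w.
\]
Because $|\lambda^N - 1| \leq 3^{-1}$, the coefficient $\lambda^N$ reduces to $1$ modulo $3$, while the constant term has absolute value $|\lambda^N - 1|\cdot |a|/r = 1$, so its reduction $C$ modulo $3$ lies in $\{1, 2\}$. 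Hence the induced action of $g^N$ on the residue classes of $v$ is the translation $w \mapsto w + C$, which is the ``nontrivial translation $z+1$ or $z+2$'' of the lemma, read as the cyclic action $g$ induces on the sub-disks immediately below $v$ after following the orbit of $v$ around and back.

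The main obstacle is bookkeeping rather than deep analysis. The argument must be carried out in the (possibly ramified) finite extension of $\mathbb{Q}_3$ containing $\lambda$ and $a$, interpreting combinatorial distance in the tree of that extension. One also has to justify the sharp equality $|\lambda^N - 1| = 3^{-n}$ from the minimality of $N$: if it failed, some proper divisor of $N$ would already be a return time, contradicting minimality. Finally, the edge case in which the leading $3$ of $\lambda - 1$ cancels with $2h\pi^d$ (possible only when $k = 1$ and the extension is ramified enough to admit it) needs a brief separate treatment; the return time gets shifted but the translation conclusion is unchanged.
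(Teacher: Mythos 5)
Your proposal is correct and follows essentially the same route as the paper: the asserted valuation fact $|\lambda^{3^i}-1|=3^{-(i+1)}$ is exactly the content of the paper's Claims 1 and 2 (proved there by the same cube/binomial expansion you invoke), and your local-coordinate computation $w\mapsto \lambda^N w + (\lambda^N-1)a/r$, with multiplier reducing to $1$ and constant term of absolute value $1$, is precisely the paper's first-return computation. The only difference is organizational: you read off the return time $N=3^{n-1}$ directly from the valuation, whereas the paper establishes the period-tripling by induction on the combinatorial distance.
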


\begin{proof}

We prove this using induction, considering the local map centered at a point $v$ that is combinatorial distance $n$ away from the 0-branch.  Then $v \equiv 0$ (mod $3^j$) but $v \not\equiv 0$ (mod $3^{j+n}$) for $n\in\mathbb{N}$, so $v \equiv u\cdot 3^j + \sum^{n-1}_{i=1}a_i3^{j+i}$ (mod $3^{j+n}$), with $u\in\{1,2\}$ and $a_i\in\{0,1,2\}$.  

For the base case $v$ is combinatorial distance 1 away from the 0-branch, so $v$ is fixed at the ${j+1}^{st}$ level.  We compute the local map:
\[
\begin{array}{rcl}
\frac{1}{3^{j+1}}(g(3^{j+1}z+v) - v) & = & \frac{1}{3^{j+1}}(\lambda\cdot 3^{j+1} z + \lambda v - v )\\
& = & \lambda z + \frac{\lambda - 1}{3^{j+1}}\cdot v\\
& \equiv & \lambda z + \frac{3(1+2h\cdot \frac{\pi^d}{3} + \cdots )}{3^{j+1}}(u \cdot 3^j) \qquad (\mbox{mod }3^{j+1})\\
& \equiv & \lambda z  + (1+2h\cdot \frac{\pi^d}{3} + \cdots )\cdot u \qquad (\mbox{mod }3^{j+1})\\
& \equiv & z + u \qquad (\mbox{mod } 3),
\end{array}
\] 
where $u\in\{1,2\}$.  Thus $g$ is locally a nontrivial translation, so the period of $v$ will increase by a factor 3 at the ${j+2}^{st}$ level.  We assume this is the case for all combinatorial distances away from 0 up to $n-1$, so that at each level the period of $v$ will increase by a factor of $3$.  Now we will show this is true for combinatorial distance $n$ away from 0, noting that $v$ will be in a $3^{n-1}$-cycle at the ${j+n}^{th}$ level.  We compute the local map:
\begin{align}
\frac{1}{3^{j+n}}(g^{3^{n-1}}(3^{j+n}z+v) - v) & =  \frac{1}{3^{j+n}}(\lambda^{3^{n-1}}\cdot 3^{j+n} z + \lambda^{3^{n-1}} v - v )\nonumber\\
& =  \lambda^{3^{n-1}} z + \frac{\lambda^{3^{n-1}} - 1}{3^{j+n}}\cdot v\nonumber\\
& =  \lambda^{3^{n-1}} z + \frac{b}{3^{j}}\cdot v \qquad (3 \nmid b)\label{eqn:1}\\
& \equiv  z + \overline{b\cdot i} \qquad (\mbox{mod } 3)\label{eqn:2},
\end{align}

where $\overline{b\cdot i} \in\{ 1,2\}$.  (\ref{eqn:1}) and (\ref{eqn:2}) follow from claims 1 and 2 below.

\subsection*{Claim 1:} \textbf{ $\lambda^{3^{n-1}}\equiv 1$ (mod $3^n$) for all $n$}

\noindent We prove this claim by induction:

Base case, $n=1$:  $\lambda^1 \equiv 4$ (mod $3^2$) $\equiv 1 $ (mod $3^1$).

Now assume that $\lambda^{3^{n-2}}\equiv 1$ (mod $3^{n-1}$), and we will show this holds for $\lambda^{3^{n-1}}$.  Note that $\lambda^{3^{n-2}} = 1+\displaystyle\sum^{\infty}_{i=0} e_i\pi^{d+i}$ with $e_i \in \{0,1,2\}$ and $3^k\leq \pi^d < 3^{k+1}$, so 
\[ 
\lambda^{3^{n-2}} \equiv 1\mbox{ (mod } 3^{n-1}) \equiv 1 + \displaystyle\sum^{N}_{i=l} e_i\pi^{d+i} \mbox{ (mod } 3^{n}),
\]
where $3^{n-1}\leq \pi^{d+N}< 3^{n}$ but $3^n\leq \pi^{d+N+1}$, and $l$ is the least $i$ such that $3^{n-1}\leq \pi^{d+i}<3^n$.  Then 
\[
\begin{array}{rcl}
\lambda^{3^{n-1}}  =  (\lambda^{3^{n-2}})^3 & \equiv & \left(1+\displaystyle\sum^{N}_{i=l} e_i\pi^{d+i}\right)^3 \qquad (\mbox{mod }3^{n})\\
& = & 1 + 3\displaystyle\sum^{N}_{i=l} e_i\pi^{d+i}+ 3\left(\displaystyle\sum^{N}_{i=l} e_i\pi^{d+i}\right)^2 + \left(\displaystyle\sum^{N}_{i=l} e_i\pi^{d+i}\right)^3  \qquad (\mbox{mod }3^{n})\\
& \equiv & 1 \qquad (\mbox{mod }3^{n}),\\
\end{array}
\]
since $3^n\leq 3\pi^{d+l}<3^{n+1}$.  Thus, $3^n | (\lambda^{3^{n-1}}-1)$, and we let the remainder $\frac{\lambda^{3^{n-1}}-1}{3^n}=b$.

\subsection*{Claim 2:} \textbf{ $\lambda^{3^{n-1}}\not\equiv 1$ (mod $3^{n+1}$) for all $n$}

\noindent We prove this claim by induction:

Base case, $n=1$: $\lambda^1 \equiv 4$ (mod $3^2$) $\not\equiv 1 $ (mod $3^2$).

Now assume that $\lambda^{3^{n-2}}\not\equiv 1$ (mod $3^{n}$), and we will show this holds for $\lambda^{3^{n-1}}$. Note that $\lambda^{3^{n-2}} = 1 + \displaystyle\sum^{N}_{i=l} e_i\pi^{d+i}$  (mod $3^{n}$) where at least one $e_i\neq 0$.  Let $e_m$ be the first non-zero $e_i$, noting that $l\leq m\leq N$.  So

\[
\lambda^{3^{n-2}} \equiv 1 + e_m\pi^{d+m} +  \displaystyle\sum^{N}_{i=m+1} e_i\pi^{d+i} \mbox{ (mod } 3^n)\equiv 1 + e_m\pi^{d+m} +  \displaystyle\sum^{M}_{i=m+1} e_i\pi^{d+i} \mbox{ (mod } 3^{n+1}),
\]
where $3^{n}\leq \pi^{d+M}< 3^{n+1}$ but $3^{n+1}\leq \pi^{d+M+1}$, and $e_m\neq 0$.  Then

\[
\begin{array}{rcl}
\lambda^{3^{n-1}}  =  (\lambda^{3^{n-2}})^3 & \equiv & \left(1 + e_m\pi^{d+m} +  \displaystyle\sum^{M}_{i=m+1} e_i\pi^{d+i} \right)^3 \qquad (\mbox{mod }3^{n+1})\\
& \equiv & 1 + 3e_m\pi^{d+m}  + 3(e_m\pi^{d+m} )^2 + (e_m\pi^{d+m} )^3\\
& & + 3(1 + 2e_m\pi^{d+m} + (e_m\pi^{d+m} )^2)\displaystyle\sum^{M}_{i=m+1} e_i\pi^{d+i}\\
& &  +3(1+e_m\pi^{d+m} )\left(\displaystyle\sum^{M}_{i=m+1} e_i\pi^{d+i}\right)^2 + \left( \displaystyle\sum^{M}_{i=m+1} e_i\pi^{d+i}\right)^3 \qquad (\mbox{mod }3^{n+1})\\
& \equiv & 1 + 3e_m\pi^{d+m} + 3 \displaystyle\sum^{N}_{i=m+1} e_i\pi^{d+i}\qquad (\mbox{mod }3^{n+1})\\
& \not\equiv & 1 \qquad (\mbox{mod }3^{n+1})\\
\end{array}
\]
since $e_m\neq 0$ and $3^n\leq 3\pi^{d+m}<3^{n+1}$.  Thus, $3^{n+1} \nmid (\lambda^{3^{n-1}}-1)$, so $b$ is not divisible by 3.  In particular,  $\overline{b\cdot i} \in\{ 1,2\}$.

\end{proof}

We now consider the implications of the linearization disk.  The dynamics of $f_c$ will match that of $\lambda z$ for all points inside the disk.  By Lemma \ref{linear}, the map $\lambda z$ behaves locally as a non-trivial translation.  Translation maps have local period equal to the residue characteristic of the base field, which is 3 in this case.  Any vertex $\zeta = D(a,3^{-n})\not\equiv x$ will be in a 3-cycle (mod $3^{n+1}$) under iteration of $\lambda z$, so that the total length of the cycle containing $\zeta$ will increase by a factor of 3, for all $n$.  In particular, the total length of the cycle containing $\zeta = f_c^2(0)$ calculated (mod $3^{n+i}$) will increase by a factor of 3 for all $i$, provided that it is contained in the linearization disk around the fixed point, $x$, for $f_c$.  We have shown that for $c\equiv -2 $ (mod $3^k$) but $c\not\equiv -2 $ (mod $3^{k+1}$), $f_c^2(0)$ is in the linearization disk around $x$ for $k\geq 2$.  Therefore the corresponding critical orbit will be of orbit type (2,1) (mod $3^k$), (2,3) (mod $3^{k+1}$), and in general, $(2, 3^i)$ (mod $3^{k+i}$), for all $i$, and any $k\geq 2$.

\end{proof}

%*************************
\section{Dynamics in $\mathbb{Z}_2$}\label{p2}
%*************************

To conclude this article, we briefly examine the case of $p=2$.  The even prime is often treated separately, a sort of special case that does not always follow the same patterns as the odd primes - and this project is no different.  Most of the results in this paper hold only for odd primes, and that is at least partially because some of the technical tools used, such as Hensel's lemma, do not apply  to $f_c$ when $p=2$.  In lieu of a proof of this fact, we offer a counterexample and a few straightforward calculations to describe the parameter space $\mathbb{Z}_2$ as we best understand it at this time.

\begin{theorem}
Theorem \ref{preper} and Proposition \ref{per} do not hold for $p=2$.
\end{theorem}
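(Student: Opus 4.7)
The plan is to refute Proposition \ref{per} at $p=2$ with a single explicit parameter and to observe that the same computation exposes why Theorem \ref{preper} also fails to describe $\mathbb{Z}_2$-dynamics. I take $c=-2\in\mathbb{Z}_2$. First I compute the critical orbit directly: $f_{-2}(0)=-2$, $f_{-2}(-2)=2$, $f_{-2}(2)=2$, so $0$ is strictly pre-periodic with orbit type $(2,1)$ in $\mathbb{Z}_2$. Next I reduce modulo $2$: since $-2\equiv 0$, we have $0\to 0$, which is periodic of exact period $1$, giving (mod $2$) orbit type $(0,1)$. This directly contradicts Proposition \ref{per}, which would force $0$ to be either periodic of exact period $1$ over $\mathbb{Z}_2$ or to have infinite orbit, neither of which holds for $c=-2$.

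Next I would explain why the Section \ref{tools} machinery collapses at $p=2$. The decisive step is Theorem \ref{benedetto}, whose hypothesis requires $\deg\varphi<p$; for the quadratic family this forces $p\geq 3$. Without that theorem, the critical point is no longer forbidden from landing on an attracting cycle after finitely many steps, and this is precisely what occurs for $c=-2$: the fixed points $2$ and $-1$ of $f_{-2}$ have multipliers $4$ and $-2$, both of $2$-adic norm strictly less than $1$, so both are attracting, and the critical orbit lands on the attracting fixed point $2$ in two iterations. For Theorem \ref{preper}, a brief inspection of the two residue classes ($0\to 0$ when $c\equiv 0$, and $0\to 1\to 0$ when $c\equiv 1$) shows that $0$ is always periodic modulo $2$, so the hypothesis $m>0$ is never satisfied; yet the example $c=-2$ exhibits a genuinely pre-periodic orbit in $\mathbb{Z}_2$ that no mod-$2$ data could predict, so the descriptive framework of Theorem \ref{preper} cannot extend to $p=2$.

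The principal obstacle is not computational but expository: I must articulate cleanly that the failure is structural, rooted in the coincidence $\deg f_c=p$ that invalidates Theorem \ref{benedetto} and the attendant Hensel-style arguments underlying both statements, rather than a correctable defect in the proofs. Handled well, the single example $c=-2$ together with the short analysis of the two residue classes modulo $2$ suffices to justify the theorem.
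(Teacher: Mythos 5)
Your proposal is correct and follows essentially the same route as the paper: the counterexample $c=-2$, whose critical orbit is fixed (mod $2$) yet resolves to the strictly pre-periodic type $(2,1)$ over $\mathbb{Z}_2$ by landing on the attracting fixed point $2$ (multiplier $4$, $|4|_2<1$), directly falsifying Proposition \ref{per}, with the failure traced to the collapse of the Section \ref{tools} machinery at $p=2$. Your diagnosis is in fact slightly sharper than the paper's, which vaguely blames Hensel's lemma, whereas you correctly pinpoint the hypothesis $\deg\varphi<p$ of Theorem \ref{benedetto} (and the vacuity of the $m>0$ hypothesis mod $2$) as the structural obstruction.
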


For example, consider the map $f(z) = z^2-2$ in $\mathbb{Z}_2$.  The critical point is fixed (mod 2), which would typically indicate the presence of a hyperbolic component of radius 1, in which every parameter other than $c=0$ corresponds to an infinite critical orbit.  However, modulo higher powers of 2, the critical orbit eventually resolves itself as type (2,1), which is where it remains over $\mathbb{Z}_2$.

The critical orbit of  $f(z) = z^2 -2$:
\[
\begin{array}{lcl}
 0 \circlearrowleft & (\mbox{mod } 2), & \mbox{ orbit type } (0,1)\\
  0 \rightarrow 2 \circlearrowleft & (\mbox{mod } 2^2), & \mbox{ orbit type } (1,1)\\
 0 \rightarrow 6 \rightarrow  2 \circlearrowleft & (\mbox{mod } 2^3), & \mbox{ orbit type } (2,1)\\
 0 \rightarrow 14 \rightarrow  2 \circlearrowleft & (\mbox{mod } 2^4), & \mbox{ orbit type } (2,1)\\
 & \vdots& \\
 0 \rightarrow -2 \rightarrow  2 \circlearrowleft & \mbox{in } \mathbb{Z}_2, & \mbox{ orbit type } (2,1)\\

\end{array}
\]

The fixed point $x=2$ in this example is an attracting fixed point, as opposed to an indifferent fixed point as when $c=-2$, $p\geq 3$.  The multiplier $\lambda = 2x = 4$ satisfies $|\lambda| = 1/4 < 1$.  In fact, every cycle which is contained in the disk $\{z\in \mathbb{C}_2 : |z|\leq 1\}$ for $f_c$ with $c\in\mathbb{Z}_2$  is attracting.  Further, the critical point is attracted to exactly one of these attracting cycles for each $f_c$ corresponding to $c\in\mathbb{Z}_2$.

Thus one fundamental difference between the $p=2$ case and $p\geq3$ is that the hyperbolic subset of the 2-adic Mandelbrot set is exactly the 2-adic Mandelbrot set (see the brief discussion in \cite{Jones2}).  The radius of each hyperbolic component is 1, and as Figure \ref{Z2} suggests, there is only one component associated to an attracting fixed point and one component associated to an attracting 2-cycle in $\mathbb{Z}_2$.

\begin{figure}[h]
\centerline{ \includegraphics[width=3in]{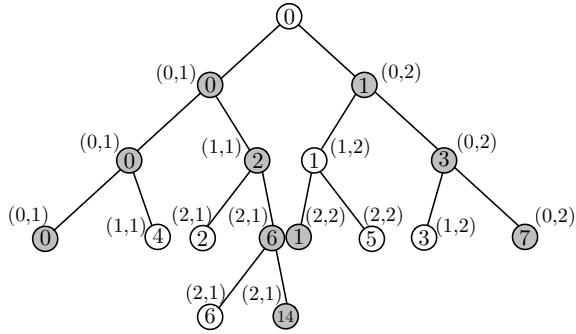}}
 \caption{$\mathbb{Z}_2$ as a parameter space, with the critical orbit type corresponding to each parameter, calculated at that level.  The 3 paths shaded in gray indicate the known PCF parameters: $c=0$, $c=-2$ and $c=-1$.  All other branches seem to correspond to parameters with infinite critical orbit.\protect\label{Z2}}
\end{figure}

Because Proposition \ref{fixed tail} does not apply in the $p=2$ setting, since neither of the (mod 2) critical orbits are pre-periodic, we cannot be sure that the 3 PCF parameters indicated in Figure \ref{Z2} are the only ones that live in $\mathbb{Z}_2$.  The critical orbit trees associated to these parameters are shown in Figure \ref{Z2 trees}; notice that the tail branches in the (2,1) tree, rather than the cycle.  It would be interesting to investigate whether there is a bound on the number of times the tail may branch for a finite critical orbit.  This information would allow us to find all of the PCF points in $\mathbb{Z}_2$.

\begin{figure}[h]
\centerline{ \includegraphics[width=1.7in]{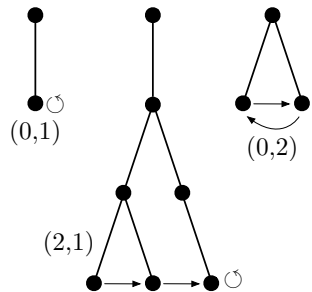}}
 \caption{The 3 known distinct finite critical orbit trees in $\mathbb{Z}_2$, with corresponding critical orbit type labeled below.\protect\label{Z2 trees}}
\end{figure}

This is certainly a topic that is wide open with the potential for future work.  And once all of the possible critical orbit structures are fully understood for quadratic polynomials over $\mathbb{Z}_p$, for all primes $p\geq 2$,  a natural next step will be to explore the possible structures in finite extensions of $\mathbb{Q}_p$, and finally in $\mathbb{C}_p$.  Only then will we have a complete picture of $p$-adic Hubbard trees for polynomials of the form $z^2 + c$.

%*******************************************

\end{document}